\newtheorem{thm}{Theorem}[section]
 \newtheorem{cor}[thm]{Corollary}
 \newtheorem{lem}[thm]{Lemma}
 \newtheorem{prop}[thm]{Proposition}
 \theoremstyle{definition}
 \newtheorem{dfn}[thm]{Definition}
 \newtheorem{ex}[thm]{Example}
 \theoremstyle{remark}
 \newtheorem{rmk}[thm]{Remark}
\numberwithin{equation}{section}
\newcommand{\e}{{\mathrm{e}}}
\title{$\mathcal{P}$-canonical forms and Drazin inverses}
\author{\bf M. MOU\c{C}OUF}
\date{}
\subjclass[2020]{15A09, 15AXX, 05A10, 11B37}
\keywords{Powers of matrices, $\mathcal{P}$-Canonical forms, Linear recurrence sequences, Binomial coefficients, Drazin inverses}
\begin{document}
\begin{abstract}
In this paper, $\mathcal{P}$-canonical forms of $(A^{k})_{k}$ (or simply of the matrix $A$) are defined and some of their properties are proved. It is also shown how we can deduce from them many interesting informations about the matrix $A$. In addition, it is proved that the  $\mathcal{P}$-canonical forms of $A$ can be written as a sum of two parts, the geometric and the non-geometric parts of $A$, and that the $\mathcal{P}$-canonical form of the Drazin inverse $A_{d}$ of $A$ can be deduced by simply plugging $-k$ for $k$ in the geometric part of $A$. Finally, several examples are provided to illustrate the obtained results.
\end{abstract}
\maketitle
\begin{center}
{\footnotesize Department of Mathematics, Faculty of Science, Chouaib Doukkali University,\\Morocco\\
Email: moucouf@hotmail.com}
\end{center}
\section{Introduction}
\label{sec1}
Let $A$ be a nonsingular matrix over a field $F$. It is a well known fact that, for many numerical examples of matrices $A$, replacing $k$ with $-k$ in certain forms of $A^{k}$, one can obtain the $k$th power of the inverse $A^{-1}$ of $A$. However, this fact is not proven in general. The problem here is that the $k$th power of a matrix can be represented in several forms. In fact, It can happen that in certain forms of $A^{k}$ it is not even possible to substitute $k=-1$ and, in other forms we can substitute $k=-1$ but we does not obtain $A^{-1}$. For example, Let $A$ be an $r$-circulant matrix. The expressions of $A^{k}$ given in Theorem 4.1 of~\cite{Mou1} and Theorem 3.1 of~\cite{Mou2} do not provide $A^{-1}$ when we plug into them $k=-1$.
\\In this paper, we consider an arbitrary matrix $A$, singular or nonsingular, with entries in a field $F$ and we prove that if we plug $-k$ for $k$ into the geometric part of $(A^{k})_{k}$ we get the $k$th power of the Drazin inverse of $A$. In order to avoid any confusion that may arise by using this plugging-in operation, we begin by showing that the representations of $(A^{k})_{k}$ into them we plug $-k$ for $k$, are canonical.
\\An element $a$ of an associative ring is said to have a Drazin inverse~\cite{Draz} if there exists an element $b$, written $b=a_{d}$, such that
\begin{equation*}
a^{k}ba=a^{k},\hspace{1.5cm} ab=ba,\hspace{1.5cm} bab=b
\end{equation*}
for some nonnegative integer $k$. It is well known that any element of any associative ring has at most one Drazin inverse (see~\cite{Draz}).
\\The theory of Drazin inverse has been extensively studied and successfully applied in many fields of science such as functional analysis, matrix
computations, combinatorial problems, numerical analysis, statistics, population models, differential equations, Markov chains, control theory, and cryptography, etc.~\cite{Camp, Camp1, Ben, Zhang, Liu, Lev}. For this reason, many interesting properties of the Drazin inverse have been obtained~\cite{Draz, Cvet, Wei, Camp2, Liu} and a variety of direct and iterative methods have been developed for computation of this type of generalized inverse~\cite{Grev, Ros, Wilk, Camp3, Zhang, Cvet, Wei, Sol, Kyrc, Yu}.
\\It should be noted that the method presented herein provides a closed-form formula for the $k$th power of the Drazin inverse $A_{d}$ of $A$ and gives other interesting information on the matrix $A$ such as the minimal polynomial and the Jordan-Chevalley decomposition of $A$. But for this, a closed-form formula for the $k$th power of the matrix $A$ is required and this can be done by any of the well-known methods such as Kwapisz's method~\cite{Kwap}.
\\The organization of this paper is as follows. In Section 2, some algebraic results are established for the $F$-algebra of all linear recurrence sequences over $F$ whose characteristic polynomials split over $F$, and then, the definitions of the geometric part and the non-geometric part of a linear recurrence sequence are given. Results obtained there will be used in Section 3 to derive similar results for the set of all sequences of matrices, with coefficients in $F$, satisfying linear homogeneous recurrence relations with constant coefficients in $F$. In addition, some interesting properties are shown about the $\mathcal{P}$-canonical forms of matrices defined in this section. Lastly, section 4 shows that the $\mathcal{P}$-canonical forms of the Drazin inverse of a matrix $A$ can be deduced from those of the matrix $A$ by a simple plugging-in operation.
\subsection{Notations}\hfill\\
Throughout the paper, we use the following notations:
\begin{itemize}
\item $F$ is a field and $\mathcal{C}_{F}$ is the set of all sequences $\pmb{s}=(s_{k})_{k\geqslant 0}$ over $F$. It is well known that $\mathcal{C}_{F}$ is an $F$-algebra under componentwise addition, multiplication and scalar multiplication.
\item Sequences in this paper are written in bold symbol.
\item $\pmb{\Lambda}_{i}$, $i\geqslant 0$, is the element $(\binom{k}{i})_{k\geq 0}$ of $\mathcal{C}_{F}$.
\item $\pmb{\widetilde{\Lambda}}_{i}$, $i\geqslant 0$, is the element $(\binom{-k}{i})_{k\geq 0}$ of $\mathcal{C}_{F}$.
\item $\mathcal{T}$ is the set $\{\pmb{\Lambda}_{i}/i\geqslant 0\}$.
\item $\pmb{\Gamma}$ is the element $(0,1,2,\ldots)$ of $\mathcal{C}_{F}$.
\item $\mathcal{H}$ is the set $\{\pmb{\Gamma}^{i}/i\geqslant 0\}$.
\item If $\mathcal{R}$ is a subring of a ring $\mathcal{D}$ and $L$ is subset of $\mathcal{D}$, then $\mathcal{R}\langle L\rangle$ denotes the submodule spanned by $L$.
\item If $\mathcal{R}$ is a subalgebra of an algebra $\mathcal{K}$ and $L$ is a subset of $\mathcal{K}$, then $\mathcal{K}[L]$ denotes the subalgebra of obtained by adjoining to $\mathcal{K}$ the set $L$.
\item $M_{q}(G)$ is the set of $q\times q$ matrices over an $F$-algebra $G$.
\item $\mathcal{S}^{\ast}=\{\pmb{\lambda}=(\lambda^{k})_{k\geqslant 0}/\lambda\in F, \lambda\neq 0\}$ the set of all nonzero geometric sequences.
\item $\mathcal{S}^{\circ}=\{\pmb{0}_{i}\in \mathcal{C}_{F}/i\in \mathbb{N}, \pmb{0}_{i}(k)=\delta_{ik}\}$.
\item $\mathcal{S}=\mathcal{S}^{\ast}\cup \mathcal{S}^{\circ}$.
\item $F_{\mathcal{S}}$ denotes the $F$-vector spaces spanned by $\mathcal{S}$. It is well known that $F_{\mathcal{S}}$ is the set of all linear recurrence sequences over $F$ whose characteristic polynomials are of the form $X^{m}P(X)$ where the polynomials $P(X)$ are square-free and split over $F$ with nonzero constant terms.
\item $F_{\mathcal{S}^{\ast}}$ denotes the $F$-vector spaces spanned by $\mathcal{S}^{\ast}$. It is well known that $F_{\mathcal{S}^{\ast}}$ is the set of all linear recurrence sequences over $F$ whose characteristic polynomials are square-free and split over $F$ with nonzero constant terms.
\item $F_{\mathcal{S}^{\circ}}$ denotes the $F$-vector spaces spanned by $\mathcal{S}^{\circ}$. It is well known that $F_{\mathcal{S}^{\circ}}$ is the set of all linear recurrence sequences over $F$ whose characteristic polynomials split over $F$ and have zero as their only root.
\item $\mathcal{G}_{q}(F)$ denotes the subalgebra $M_{q}(F_{\mathcal{S^{\ast}}})[\mathcal{T}]$ of $M_{q}(\mathcal{C}_{F})$.
\item $\mathcal{L}_{q}(F)$ denotes the subalgebra $M_{q}(F_{\mathcal{S}})[\mathcal{T}]$ of $M_{q}(\mathcal{C}_{F})$.
\end{itemize}
It is well known from the general theory of linear recurrence sequences that
\begin{itemize}
\item $\mathcal{S}$, $\mathcal{S}^{\ast}$ and $\mathcal{S}^{\circ}$ are, respectively, bases of $F_{\mathcal{S}}$, $F_{\mathcal{S}^{\ast}}$ and $F_{\mathcal{S}^{\circ}}$.
\item $F_{\mathcal{S}}$, $F_{\mathcal{S}^{\ast}}$ and $F_{\mathcal{S}^{\circ}}$ are subalgebras of $\mathcal{C}_{F}$. More precisely, the set $\mathcal{S}^{\ast}$ is a group, and hence $F_{\mathcal{S}^{\ast}}$ is exactly the group algebra of $\mathcal{S}^{\ast}$ over $F$.
\item $F_{\mathcal{S}^{\circ}}$ and $F_{\mathcal{S}^{\ast}}$ are supplementary vector spaces relative to $F_{\mathcal{S}}$, i.e., $F_{\mathcal{S}}=F_{\mathcal{S}^{\circ}}\oplus F_{\mathcal{S}^{\ast}}$.
\end{itemize}
\section{Canonical forms for linear recurrence sequences}
\label{sect:Canonical forms}
Let $\mathcal{T}=\{\pmb{\Lambda}_{i}/i\geqslant 0\}$ be the set of all sequences $\pmb{\Lambda}_{i}=(\binom{k}{i})_{k\geq 0}$ and consider the $F$-vector space $F\langle \mathcal{T} \rangle$ spanned by $\mathcal{T}$. Then we have the following result
\begin{prop}\label{Prop 11}
Let $F$ be a field. Let $\widetilde{\mathcal{T}}$ be the set of all sequences $\widetilde{\pmb{\Lambda}}_{i}=(\binom{-k}{i})_{k\geq 0}$ and consider the sequence $\pmb{\Gamma}=(0,1,2,\ldots)$. Then we have
\begin{enumerate}[1.]
\item $F\langle \mathcal{T} \rangle$ is a subalgebra of $\mathcal{C}_{F}$ and $\mathcal{T}$ is a basis of $F\langle \mathcal{T} \rangle$.
\item $\widetilde{\mathcal{T}}$ is a basis of $F\langle \mathcal{T} \rangle$, and the linear automorphism $\chi$ of $F\langle \mathcal{T} \rangle$ which maps $\pmb{\Lambda}_{i}$ to $\widetilde{\pmb{\Lambda}}_{i}$ is an involution of the algebra $F\langle \mathcal{T} \rangle$.
\item $F\langle \mathcal{T} \rangle$ and $F_{\mathcal{S}^{\ast}}$ are $F$-linearly disjoint.
\item If $F$ is of characteristic $0$, then the sequence $\pmb{\Gamma}$ is transcendental over the ring $F_{\mathcal{S}^{\ast}}$.
\item $F_{\mathcal{S}}[\mathcal{T}]=F_{\mathcal{S}^{\circ}}\oplus F_{\mathcal{S}^{\ast}}[\mathcal{T}]$.
\end{enumerate}
\end{prop}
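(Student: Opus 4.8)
The plan is to handle the five assertions in turn while isolating a single linear-algebraic fact — the linear independence of the generalized power sequences $(\binom{k}{i}\lambda^{k})_{k}$ — that will drive items 3, 4 and the directness in item 5. For item 1 I would first prove linear independence of $\mathcal{T}$ by evaluating a finite relation $\sum_{i}c_{i}\Lambda_{i}=0$ successively at $k=0,1,2,\ldots$: the coefficient matrix is the lower-triangular Pascal matrix $(\binom{k}{i})$, which is unitriangular and hence invertible over any field, forcing all $c_{i}=0$. To see that $F\langle\mathcal T\rangle$ is a subalgebra it suffices to check closure under the componentwise product on basis elements, and here I would invoke the Vandermonde-type identity
\[
\binom{k}{i}\binom{k}{j}=\sum_{t=0}^{\min(i,j)}\frac{(i+j-t)!}{t!\,(i-t)!\,(j-t)!}\,\binom{k}{i+j-t},
\]
whose structure constants are multinomial coefficients, i.e. integers whose images lie in $F$. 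This exhibits $\Lambda_i\Lambda_j$ as a finite $F$-combination of the $\Lambda_m$, and since $\Lambda_0$ is the constant sequence $1$ it is the unit, so $F\langle\mathcal T\rangle$ is a unital subalgebra.

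The guiding principle for item 2 is that every relation among the $\Lambda$'s descends from a genuine identity in the ring of integer-valued polynomials in $\mathbb{Q}[x]$ evaluated at $x=k$, and that replacing $k$ by $-k$ merely re-evaluates the same identity at $x=-k$, producing the $\widetilde\Lambda$'s. Concretely, from the Vandermonde expansion $\binom{-x}{i}=\sum_{j\le i}a_{ij}\binom{x}{j}$ with $a_{ii}=(-1)^{i}$, the transition $\Lambda_j\mapsto\widetilde\Lambda_i$ is triangular with invertible diagonal, so $\widetilde{\mathcal T}$ is a basis and $\chi$ is a linear automorphism. Substituting $x\to-x$ in that relation gives $\binom{x}{i}=\sum_{j\le i}a_{ij}\binom{-x}{j}$, i.e. $\Lambda_i=\sum_j a_{ij}\widetilde\Lambda_j$, whence $\chi(\widetilde\Lambda_i)=\Lambda_i$ and $\chi^{2}=\mathrm{id}$. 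Finally, evaluating the product identity above at $x=-k$ shows that $\widetilde\Lambda_i\widetilde\Lambda_j$ carries exactly the same integer structure constants as $\Lambda_i\Lambda_j$; therefore $\chi$ sends products to products and is an algebra involution. Note this argument is characteristic-free, since it only reduces integer identities into $F$ and never needs injectivity of evaluation.

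The key lemma for items 3 and 4 is that $\{(\binom{k}{i}\lambda^{k})_{k}:i\ge0,\ \lambda\in F^{\ast}\}$ is $F$-linearly independent. I would prove this with the shift operator $E$ using the characteristic-free identity $(E-\lambda)(\binom{k}{i}\lambda^{k})_{k}=\lambda(\binom{k}{i-1}\lambda^{k})_{k}$ coming from Pascal's rule: each $(\binom{k}{i}\lambda^{k})_{k}$ is a generalized eigenvector of $E$ of eigenvalue $\lambda$ and height $i+1$, so distinct $\lambda$ yield independent generalized eigenspaces and within each the heights separate the chain. Item 3 is then immediate, as the basis $\{\Lambda_i\}$ of $F\langle\mathcal T\rangle$ times the basis $\{\pmb\lambda\}$ of $F_{\mathcal S^{\ast}}$ is precisely this independent family, which is the definition of linear disjointness. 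For item 4, any relation $\sum_{n}a_{n}\Gamma^{n}=0$ with $a_{n}\in F_{\mathcal S^{\ast}}$ rewrites as $\sum_{\lambda}Q_{\lambda}(k)\lambda^{k}=0$ with each $Q_{\lambda}$ a polynomial sequence; the lemma forces each $Q_{\lambda}=0$, and in characteristic $0$ a polynomial sequence vanishing at every $k\ge0$ has zero coefficients, so all $a_{n}=0$. The characteristic hypothesis is essential, since $\Gamma^{p}-\Gamma=0$ in characteristic $p$.

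For item 5 I would first check that $F_{\mathcal S^{\circ}}+F_{\mathcal S^{\ast}}[\mathcal T]$ is already a subalgebra containing the generators $F_{\mathcal S}=F_{\mathcal S^{\circ}}\oplus F_{\mathcal S^{\ast}}$ and $\mathcal T$: the only non-obvious closure is that a finitely supported sequence times anything is again finitely supported (indeed $\pmb{0}_{i}\,s=s_{i}\pmb{0}_{i}$), so $F_{\mathcal S^{\circ}}$ is an absorbing summand; this yields $F_{\mathcal S}[\mathcal T]=F_{\mathcal S^{\circ}}+F_{\mathcal S^{\ast}}[\mathcal T]$. Directness, $F_{\mathcal S^{\circ}}\cap F_{\mathcal S^{\ast}}[\mathcal T]=0$, follows because a nonzero element of $F_{\mathcal S^{\ast}}[\mathcal T]$ satisfies a linear recurrence with characteristic polynomial $\prod(X-\lambda_{i})^{n_{i}}$, $\lambda_{i}\neq0$, whose nonzero constant term lets one run the recurrence backwards; such a sequence cannot be eventually zero unless it is identically zero, whereas every element of $F_{\mathcal S^{\circ}}$ is finitely supported. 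I expect the main obstacle to be precisely the independence lemma of the third paragraph together with the backward-recurrence argument for directness — that is, controlling how the polynomial and geometric factors interact; once that independence is secured, the algebra statements in items 1, 2 and 5 are essentially formal.
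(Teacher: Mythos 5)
Your proposal is correct, and on items 1 and 2 it follows essentially the paper's own route: the same product identity with integer multinomial structure constants (Riordan's formula) gives closure of $F\langle\mathcal{T}\rangle$ under multiplication, and the same triangular change of basis with diagonal entries $(-1)^{i}$, together with the persistence of the product identity at negative arguments, shows that $\chi$ is an algebra involution. Where you genuinely diverge is in items 3, 4 and 5. The paper does not prove the key independence of the family $\{\Lambda_{i}\pmb{\lambda}\,/\,i\in\mathbb{N},\ \pmb{\lambda}\in\mathcal{S}^{\ast}\}$ directly: it cites Theorems 1 and 2 of Fillmore--Marx (this family, suitably truncated, is a basis of the solution space of the recurrence with characteristic polynomial $\prod_{j}(X-\lambda_{j})^{m+1}$, resp. $X^{n'}\prod_{j}(X-\lambda_{j})^{m+1}$ for item 5) and then converts independence into linear disjointness via Proposition 11.6.1 of Cohn. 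You instead prove the independence lemma from scratch with the shift operator $E$: Pascal's rule gives $(E-\lambda)\bigl(\tbinom{k}{i}\lambda^{k}\bigr)_{k}=\lambda\bigl(\tbinom{k}{i-1}\lambda^{k}\bigr)_{k}$, so $\Lambda_{i}\pmb{\lambda}$ is a generalized eigenvector of $E$ of eigenvalue $\lambda$ and height $i+1$; the Bezout/coprimality argument for directness of generalized eigenspaces works in any vector space without finite-dimensionality, and applying $(E-\lambda)^{i_{0}}$ separates heights within a fixed eigenvalue. Likewise, your directness argument in item 5 — a nonzero element of $F_{\mathcal{S}^{\ast}}[\mathcal{T}]$ is annihilated by $\prod_{j}(E-\lambda_{j})^{n_{j}}$, whose nonzero constant term lets the recurrence run backwards, so an eventually zero solution is identically zero — replaces the paper's appeal to the basis theorem, and your absorbing identity $\pmb{0}_{i}\,s=s_{i}\pmb{0}_{i}$ generalizes the paper's $\pmb{0}_{i}\Lambda_{j}=\binom{i}{j}\pmb{0}_{i}$. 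What your route buys is self-containedness and characteristic-free transparency (plus the nice observation that $\Gamma^{p}=\Gamma$ in characteristic $p$, showing the hypothesis in item 4 is sharp); what the paper's route buys is brevity and a stronger intermediate fact (explicit bases of the solution spaces) that it reuses. Two small points to tighten: in item 3, independence of products of basis elements is the criterion of Cohn's Proposition 11.6.1 (valid for a single pair of bases) rather than the definition of linear disjointness, so the equivalence should be stated; and in item 4, before invoking your lemma you should explicitly rewrite each polynomial $Q_{\lambda}(k)$ in the binomial basis $\tbinom{k}{i}$, which is legitimate precisely because $F$ has characteristic $0$.
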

\begin{proof}~
\begin{enumerate}[1.]
\item Using the following formula (due to Riordan~\cite{Riord})
\begin{align*}
\binom{k}{i}\binom{k}{j}&=\sum_{m=i}^{i+j} \binom{m}{i}\binom{i}{m-j}\binom{k}{m}\\
&=\sum_{m=0}^{i} \binom{i+j-m}{m,i-m,j-m}\binom{k}{i+j-m}
\end{align*}
we can easily deduce that $\pmb{\Lambda}_{i}\pmb{\Lambda}_{j}$ is a linear combination of elements of $\mathcal{T}$ for all $i,j$. Then $F\langle \mathcal{T} \rangle$ is a subalgebra of $\mathcal{C}_{F}$. More precisely, $F\langle \mathcal{T} \rangle$ is the set of all linear recurrence sequences over $F$ with charateristic polynomials split over $F$ and have one as their only root. It is well known that $\mathcal{T}$ is a basis of $F\langle \mathcal{T} \rangle$.
\item Let $i\in \mathbb{N}$. It is clear that $\widetilde{\pmb{\Lambda}}_{0}=\pmb{\Lambda}_{0}$ and then $\widetilde{\pmb{\Lambda}}_{0}\in F\langle \mathcal{T} \rangle$. Suppose that $i\geqslant 1$. Then we have $\widetilde{\pmb{\Lambda}}_{i}=(-1)^{i}(\binom{k+i-1}{i})$. An easy application of the Vandermonde convolution shows that
$$\widetilde{\pmb{\Lambda}}_{i}=\sum_{j=1}^{i}(-1)^{i}\binom{i-1}{i-j}\pmb{\Lambda}_{j}.$$
Then $\widetilde{\pmb{\Lambda}}_{i}\in F\langle \mathcal{T} \rangle$ for all $i\in \mathbb{N}$. Since the coordinate matrix of the family $(\widetilde{\pmb{\Lambda}}_{0},\ldots,\widetilde{\pmb{\Lambda}}_{i})$ relative to $(\pmb{\Lambda}_{0},\ldots,\pmb{\Lambda}_{i})$ is the involutory lower triangular Pascal matrix of order $i+1$, it follows that $\widetilde{\mathcal{T}}$ is a basis of $F\langle \mathcal{T} \rangle$ and that $\chi$ is an involution.
\\We note that $\chi$ is an $F$-algebra automorphism is due to the fact that Riordan's formula remains true for any negative integer $k$.
\item
Let $\pmb{\lambda}_{1},\ldots,\pmb{\lambda}_{n}$ be any family of pairwise distinct elements of $\mathcal{S}^{\ast}$ and $m$ be a positive integer, and set $P(X)=\displaystyle\prod_{j=1}^{n}(X-\lambda_{j})^{m+1}$. Theorem~$1$ and Theorem~$2$ of~\cite{Fill} assure that $\{\pmb{\Lambda}_{i}\pmb{\lambda}_{j}/1\leqslant j\leqslant n \,\,\text{and}\,\, 0\leqslant i\leqslant m\}$ is a basis of the $F$-vector space of all linear recurrence sequences with characteristic polynomial $P$. It follows that the set $\{\pmb{\Lambda}_{i}\pmb{\lambda}/i\in \mathbb{N}, \pmb{\lambda}\in \mathcal{S}^{\ast}\}$ is linearly independent over the field $F$. The result now follows from Proposition~$11.6.1.$ of~\cite{Cohn} and the fact that $\mathcal{S}^{\ast}$ and $\mathcal{T}$ are, respectively, bases of the $F$-vector spaces $F_{\mathcal{S}^{\ast}}$ and $F\langle \mathcal{T} \rangle$.
\item It is well known that in the theory of linear recurrence sequences over a field $F$ with characteristic $0$, the family $\{\pmb{\Gamma}^{i}/ i\in \mathbb{N}\}$) plays the same algebraic role as that played by $\mathcal{T}$. By the same argument as in the proof of~$3.$, it turns that the $F$-vector spaces $F_{\mathcal{S}^{\ast}}$ and $F[\pmb{\Gamma}]$ are linearly disjoint. It then follows that $\pmb{\Gamma}$ is transcendental over the ring $F_{\mathcal{S}^{\ast}}$.
\item First note that $F_{\mathcal{S}^{\circ}}[\mathcal{T}]=F_{\mathcal{S}^{\circ}}$ since $\pmb{0}_{i}\pmb{\Lambda}_{j}=\binom{i}{j}\pmb{0}_{i}$ for all nonnegative integers $i$ and $j$. Now, since $$\{\pmb{0}_{0},\ldots,\pmb{0}_{n'}\}\cup\{\pmb{\Lambda}_{i}\pmb{\lambda}_{j}/\pmb{\lambda}_{0},\ldots,\pmb{\lambda}_{n}\in \mathcal{S}^{\ast} \,\,\text{and}\,\, 0\leqslant i\leqslant m\}$$ is linearly independent over the field $F$, because it is a basis of the $F$-vector space of all linear recurrence sequences with characteristic polynomial $X^{n'}\displaystyle\prod_{j=1}^{n}(X-\lambda_{j})^{m+1}$, the result $F_{\mathcal{S}}[\mathcal{T}]=F_{\mathcal{S}^{\circ}}\oplus F_{\mathcal{S}^{\ast}}[\mathcal{T}]$ follows.
\end{enumerate}
\end{proof}
\begin{rmk} To be more precise, Proposition~$11.6.1.$ of~\cite{Cohn} shows that if $A$ and $B$ are $F$-subalgebras of $\Omega$, then the following statement are equivalent:
\begin{enumerate}[1.]
\item $A$ and $B$ are $F$-linearly disjoint.
\item $\{u_{i}v_{j}\}_{i,j}$ is linearly independent over $F$ whenever $\{u_{i}\}_{i}$ is a $F$-basis of $A$ and $\{v_{j}\}_{j}$ is a $F$-basis of $B$.
\end{enumerate}
However, using this result it is easy to prove that the two following statements are also equivalent:
\begin{enumerate}[1.]
\item $A$ and $B$ are $F$-linearly disjoint.
\item There exist a $F$-basis $\{u_{i}\}_{i}$ of $A$ and a $F$-basis $\{v_{j}\}_{j}$ of $B$ such that $\{u_{i}v_{j}\}_{i,j}$ is linearly independent over $F$.
\end{enumerate}
\end{rmk}
From Proposition~\ref{Prop 11}, it follows that each $\pmb{u}\in F_{\mathcal{S}}[\mathcal{T}]$ can be written in exactly one way as $\pmb{u}=\pmb{v}+\pmb{u}_{0}\pmb{\Lambda}_{0} + \cdots + \pmb{u}_{m}\pmb{\Lambda}_{m}$, in which $\pmb{v}\in F_{\mathcal{S}^{\circ}}$ and $\pmb{u}_{0},\ldots,\pmb{u}_{m}\in F_{\mathcal{S}^{\ast}}$ and that this representation is canonical. For ease of reference, let us call
\begin{itemize}
\item $\pmb{v}+\pmb{u}_{0}\pmb{\Lambda}_{0} + \cdots + \pmb{u}_{m}\pmb{\Lambda}_{m}$ the canonical form of $\pmb{u}$ relative to $(\mathcal{S}^{\ast}, \mathcal{T})$.
\item $\pmb{u}_{0}\pmb{\Lambda}_{0} + \cdots + \pmb{u}_{m}\pmb{\Lambda}_{m}$ the geometric part of $\pmb{u}$ relative to $(\mathcal{S}^{\ast}, \mathcal{T})$.
\item $\pmb{v}$ the non-geometric part of $\pmb{u}$.
\end{itemize}
Before going further let us state the following definition.
\begin{dfn} We say a sequence of $F_{\mathcal{S}}[\mathcal{T}]$ to be purely geometric if its non-geometric part is identically zero.
\end{dfn}
\begin{lem}\label{lem 33} Let $\pmb{u}$ be a sequence over $F$. Suppose that there exist a purely geometric sequence $\pmb{w}=\pmb{w}_{0}\pmb{\Lambda}_{0} + \cdots + \pmb{w}_{m}\pmb{\Lambda}_{m}$ and a nonnegative integer $\tau$ such that $\pmb{u}(k)=\pmb{w}(k)$ for all $k\geq \tau$. Then $\pmb{u}\in F_{\mathcal{S}}[\mathcal{T}]$; in this case, $\pmb{w}_{0}\pmb{\Lambda}_{0} + \cdots + \pmb{w}_{m}\pmb{\Lambda}_{m}$ and $(\pmb{u}(0)-\pmb{w}(0))\pmb{0}_{0}+\cdots+(\pmb{u}(\tau-1)-\pmb{w}(\tau-1))\pmb{0}_{\tau-1}$ are, respectively, the geometric part and the non-geometric part of $\pmb{u}$.
\end{lem}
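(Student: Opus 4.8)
The plan is to reduce everything to the direct-sum decomposition $F_{\mathcal{S}}[\mathcal{T}] = F_{\mathcal{S}^{\circ}} \oplus F_{\mathcal{S}^{\ast}}[\mathcal{T}]$ furnished by part 5 of Proposition~\ref{Prop 11}, by exhibiting $\pmb{u}$ explicitly as a sum of a finitely supported sequence and the given element $\pmb{w}$. The whole point is that the hypothesis forces the discrepancy between $\pmb{u}$ and $\pmb{w}$ to live entirely on finitely many initial indices.

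First I would set $\pmb{v} := \pmb{u} - \pmb{w}$, viewed simply as a sequence over $F$. The hypothesis $\pmb{u}_{k} = \pmb{w}_{k}$ for all $k \geqslant \tau$ says precisely that the $k$-th term of $\pmb{v}$ vanishes for every $k \geqslant \tau$, so $\pmb{v}$ is supported in $\{0, 1, \ldots, \tau - 1\}$. Expanding a finitely supported sequence in the basis $\mathcal{S}^{\circ}$ then gives
\[
\pmb{v} = \sum_{j=0}^{\tau - 1} \pmb{v}_{j}\, \pmb{0}_{j} = \sum_{j=0}^{\tau-1} (\pmb{u}_{j} - \pmb{w}_{j})\, \pmb{0}_{j},
\]
whence $\pmb{v} \in F_{\mathcal{S}^{\circ}}$. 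Since $\pmb{u} = \pmb{v} + \pmb{w}$ with $\pmb{v} \in F_{\mathcal{S}^{\circ}}$ and $\pmb{w} \in F_{\mathcal{S}^{\ast}}[\mathcal{T}]$, part 5 of Proposition~\ref{Prop 11} immediately yields $\pmb{u} \in F_{\mathcal{S}^{\circ}} \oplus F_{\mathcal{S}^{\ast}}[\mathcal{T}] = F_{\mathcal{S}}[\mathcal{T}]$.

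It then remains to identify the two displayed expressions as the canonical parts, and for this I would invoke the uniqueness built into the very same direct sum. By definition, the canonical form of $\pmb{u}$ relative to $(\mathcal{S}^{\ast}, \mathcal{T})$ is the unique way of writing $\pmb{u}$ as an element of $F_{\mathcal{S}^{\circ}}$ plus an element of $F_{\mathcal{S}^{\ast}}[\mathcal{T}]$. Since $\pmb{u} = \pmb{v} + \pmb{w}$ is such a decomposition, uniqueness forces $\pmb{w} = \pmb{w}_{0}\Lambda_{0} + \cdots + \pmb{w}_{m}\Lambda_{m}$ to be the geometric part and $\pmb{v} = \sum_{j=0}^{\tau-1}(\pmb{u}_{j} - \pmb{w}_{j})\pmb{0}_{j}$ to be the non-geometric part, exactly as claimed.

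The computation is routine; the only point that needs care is the first step, namely verifying that a sequence vanishing from index $\tau$ on genuinely lies in $F_{\mathcal{S}^{\circ}}$. Here one must use the explicit description of $\mathcal{S}^{\circ} = \{\pmb{0}_{i}\}$ as a basis of $F_{\mathcal{S}^{\circ}}$, so that $F_{\mathcal{S}^{\circ}}$ coincides with the space of all finitely supported sequences, rather than relying on any recurrence-theoretic characterization. Once this identification is in hand, the whole statement is simply an application of the uniqueness of the decomposition in Proposition~\ref{Prop 11}.5, so I do not anticipate a genuine obstacle beyond keeping the two meanings of the subscript notation (coefficient sequence versus term) straight.
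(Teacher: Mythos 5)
Your proof is correct and follows essentially the same route as the paper: the paper's own (one-line) proof simply exhibits the identity $\pmb{u}=(\pmb{u}_{0}-\pmb{w}_{0})\pmb{0}_{0}+\cdots+(\pmb{u}_{\tau-1}-\pmb{w}_{\tau-1})\pmb{0}_{\tau-1}+\pmb{w}$ and leaves implicit exactly the two points you spell out, namely that the discrepancy lies in $F_{\mathcal{S}^{\circ}}$ and that uniqueness of the decomposition $F_{\mathcal{S}}[\mathcal{T}]=F_{\mathcal{S}^{\circ}}\oplus F_{\mathcal{S}^{\ast}}[\mathcal{T}]$ identifies the two parts.
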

\begin{proof} Follows immediately from the fact that $$\pmb{u}=(\pmb{u}(0)-\pmb{w}(0))\pmb{0}_{0}+\cdots+(\pmb{u}(\tau-1)-\pmb{w}(\tau-1))\pmb{0}_{\tau-1}+\pmb{w}.$$
\end{proof}
\begin{rmk}
It is easily seen that two sequences $\pmb{u}, \pmb{v}\in F_{\mathcal{S}}[\mathcal{T}]$ are shift equivalent if and only if they have the same geometric parts.
\end{rmk}
\section{$\mathcal{P}$-canonical forms of matrices}
\label{sect:canonical forms of matrices}
Let $M_{q}(\mathcal{C}_{F})$ be the set of all matrices of order $q$ over $\mathcal{C}_{F}$ and consider the subalgebra $\mathcal{L}_{q}(F)$ of $M_{q}(\mathcal{C}_{F})$. It is straightforward to check that $\mathcal{L}_{q}(F)$ is the set of all sequences of matrices of order $q$ over $\mathcal{C}_{F}$ that are linear recurrence sequences with characteristic polynomials split over $F$, and that $M_{q}(F_{\mathcal{S}^{\circ}})$ is the subset of $\mathcal{L}_{q}(F)$ consisting of all sequences of matrices whose terms vanish from some point onwards. Since
$$F_{\mathcal{S}}[\mathcal{T}]=F_{\mathcal{S}^{\circ}}\oplus F_{\mathcal{S}^{\ast}}[\mathcal{T}],$$
it follows that
$$\mathcal{L}_{q}(F)=
M_{q}(F_{\mathcal{S}^{\circ}})\oplus \mathcal{G}_{q}(F),$$
which shows that each $\pmb{U}\in \mathcal{L}_{q}(F)$ can be written in exactly one way as $\pmb{U}=\pmb{V}+\pmb{W}$, in which $\pmb{V}\in M_{q}(F_{\mathcal{S}^{\circ}})$ and $\pmb{W}\in \mathcal{G}_{q}(F)$. Let $\pmb{U_{ij}}$ be the $(i, j)$-th entry of $\pmb{U}$, then $\pmb{V_{ij}}$ and $\pmb{W_{ij}}$ are, respectively, the non-geometric part and the geometric part of $\pmb{U_{ij}}$. We say then that $\pmb{V}$ and $\pmb{W}$ are, respectively, the non-geometric part and the geometric part of $\pmb{U}$. We note that there exist matrices $\mathcal{V}_{0},\ldots, \mathcal{V}_{n}$ with coefficients in $F$ and $\pmb{\mathcal{W}}_{0},\ldots, \pmb{\mathcal{W}}_{m}$ with coefficients in $F_{\mathcal{S}^{\ast}}$ such that
$$\pmb{V}=\mathcal{V}_{0}\pmb{0}_{0}+\cdots+\mathcal{V}_{n}\pmb{0}_{n}\quad\text{is the non-geometric part of}\quad \pmb{U}$$
and
$$\pmb{W}=\pmb{\mathcal{W}}_{0}\pmb{\Lambda}_{0}+\cdots+\pmb{\mathcal{W}}_{m}\pmb{\Lambda}_{m}\quad\text{is the geometric part of}\quad \pmb{U}.$$
We also note that, in view of Proposition~\ref{Prop 11}, the matrices $\mathcal{V}_{0},\ldots, \mathcal{V}_{n}$ and $\pmb{\mathcal{W}}_{0},\ldots, \pmb{\mathcal{W}}_{m}$ are uniquely determined by $\pmb{U}$. We conclude that every matrix $\pmb{U}$ of $\mathcal{L}_{q}(F)$ can be written in the form $$\pmb{U}=\mathcal{V}_{0}\pmb{0}_{0}+\cdots+\mathcal{V}_{n}\pmb{0}_{n}+\pmb{\mathcal{W}}_{0}\pmb{\Lambda}_{0}+
\cdots+\pmb{\mathcal{W}}_{m}\pmb{\Lambda}_{m},$$
where $\mathcal{V}_{0},\ldots, \mathcal{V}_{n}\in M_{q}(F)$ and $\pmb{\mathcal{W}}_{0},\ldots, \pmb{\mathcal{W}}_{m}\in M_{q}(F_{\mathcal{S}^{\ast}})$, and that these matrices are uniquely determined by $\pmb{U}$. Let us call this representation the $\mathcal{P}$-canonical form of $\pmb{U}$ relative to $(\mathcal{S}^{\ast}, \mathcal{T})$, and let us abbreviate this with \enquote{the $\mathcal{P}$-cf} of $\pmb{U}$ relative to $(\mathcal{S}^{\ast}, \mathcal{T})$.
\\Let now $A$ be a matrix over $F$ and let $P_{A}$ be its characteristic polynomial. It is well known that the entries of $\pmb{A}=(A^{k})_{k\geq 0}$ are linear recurrence sequences with characteristic polynomial $P_{A}$. From this it follows that if $P_{A}$ splits over $F$, then
$$\pmb{A}\in \mathcal{L}_{q}(F)=
M_{q}(F_{\mathcal{S}^{\circ}})\oplus \mathcal{G}_{q}(F),$$
and hence from the discussion above , there exist matrices $\pmb{\mathcal{A}}_{0},\ldots, \pmb{\mathcal{A}}_{l}$ with coefficients in $F_{\mathcal{S}^{\ast}}$ and a matrix $\pmb{N}(A)$ with coefficients in $F_{\mathcal{S}^{\circ}}$ such that
 $$\pmb{A}= \pmb{N}(A) + \pmb{\mathcal{A}}_{0}\pmb{\Lambda}_{0} + \cdots + \pmb{\mathcal{A}}_{l}\pmb{\Lambda}_{l},$$
and this representation is the $\mathcal{P}$-cf of $\pmb{A}$ (or simply of $A$) relative to $(\mathcal{S}^{\ast}, \mathcal{T})$.
\\Obviously, when $F$ is a field of characteristic $0$, we obtain the same results as above if we decide to use $\mathcal{H}$ instead of $\mathcal{T}$. In this case the $\mathcal{P}$-cf of $A$ relative to $(\mathcal{S}^{\ast}, \mathcal{H})$ has the form
$$\pmb{A}= \pmb{N}(A) + \pmb{\mathcal{A'}}_{0}\pmb{\Gamma}^{0} + \cdots + \pmb{\mathcal{A'}}_{l}\pmb{\Gamma}^{l}$$
For simplicity, in the sequel we call $\pmb{\mathcal{A}}_{i}$ and $\pmb{\mathcal{A'}}_{i}$ the $i$th coordinate of $A$ with respect to $(\mathcal{S}^{\ast}, \mathcal{T})$ and $(\mathcal{S}^{\ast}, \mathcal{H})$ respectively.
\\In the following proposition we give some properties of the matrices $\pmb{N}(A),\pmb{\mathcal{A}}_{0},\ldots, \pmb{\mathcal{A}}_{l}$.
\begin{prop}\label{Prop 23} Let $A$ be a square matrix of order $q$ over $F$, and suppose that its minimal polynomial is of the form $m_{A}=X^{t_{0}}\prod_{j=1}^{p}\displaystyle(X-\lambda_{j})^{t_{j}}$ with the $\lambda_{j}$ distinct and belong to $F$(possibly $t_{0}=0$). Let $m=\max\{t_{1}-1,\ldots,t_{p}-1\}$ and let $\pi_{0},\ldots,\pi_{p}$ be the spectral projections of $A$ at $0,\lambda_{1},\ldots,\lambda_{p}$, respectively. Then we have
\begin{enumerate}[1.]
\item $\pmb{\mathcal{A}}_{i}=\displaystyle\sum_{j=1}^{p}\pmb{\lambda}_{j}\lambda_{j}^{-i}(A-\lambda_{j}I_{q})^{i}\pi_{j}$, $i\geqslant 1$, and $\pmb{\mathcal{A}}_{0}=\pmb{\lambda}_{1}\pi_{1}+\cdots+\pmb{\lambda}_{p}\pi_{p}$.
\item $\pmb{N}(A)=\displaystyle\sum_{i=0}^{t_{0}-1}\pmb{0}_{i}A^{i}\pi_{0}$ ($\pmb{N}(A)=0$ if and only if $t_{0}=0$).
\item $A$ is nilpotent if and only if each $\pmb{\mathcal{A}}_{i}$ is zero, i.e., $\pmb{A}= \pmb{N}(A)$.
\item $\pmb{\mathcal{A}}_{i}=0$ if and only if $i> m$.
\item If $A$ is not nilpotent, then no coordinate $\pmb{\mathcal{A}}_{i}$, $1\leq i\leq m$, is identically zero.
\item $\pmb{\mathcal{A}}_{i}=\displaystyle\sum_{j=1}^{p}\pmb{\lambda}_{j}\lambda_{j}^{-i}(A-\lambda_{j}I_{q})^{i}\pi_{j}$ is a nilpotent matrix, $i\geqslant 1$.
\item $\pmb{N}(A)=\pmb{0}_{0}\pi_{0}+\pmb{N}$ where $\pmb{N}=\displaystyle\sum_{i=1}^{t_{0}-1}\pmb{0}_{i}A^{i}\pi_{0}$ is a nilpotent matrix.
\item $\pmb{A}= (\pmb{N} + \pmb{\mathcal{A}}_{1}\pmb{\Lambda}_{1} + \cdots + \pmb{\mathcal{A}}_{m}\pmb{\Lambda}_{m}) + (\pmb{0}_{0}\pi_{0}+\pmb{\mathcal{A}}_{0}\pmb{\Lambda}_{0})$ is the unique decomposition of $\pmb{A}$ as a sum of commuting nilpotent and diagonalizable matrices.
\end{enumerate}
\end{prop}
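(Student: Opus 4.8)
The plan is to recognise the two blocks of the statement as the images, under the power map, of the classical additive Jordan--Chevalley decomposition of $A$ itself. Write $D=\lambda_1\pi_1+\cdots+\lambda_p\pi_p$ for the semisimple part of $A$ and $N_0=A-D$ for its nilpotent part; since the $\pi_j$ are the spectral projections, $D$ is diagonalizable over $F$, $N_0$ is nilpotent, and $DN_0=N_0D$. First I would check that the two blocks in the statement are exactly $\mathcal{D}:=\pmb{0}_0\pi_0+\mathcal{A}_0\Lambda_0=(D^k)_{k\ge 0}$ and $\mathcal{N}:=N+\mathcal{A}_1\Lambda_1+\cdots+\mathcal{A}_m\Lambda_m=(A^k-D^k)_{k\ge 0}$. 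For $\mathcal{D}$ this is a direct evaluation: by part 1 its entry at index $k$ is $\delta_{0k}\pi_0+\sum_{j=1}^p\lambda_j^k\pi_j$, which equals $I=D^0$ at $k=0$ and $\sum_{j\ge 1}\lambda_j^k\pi_j=D^k$ for $k\ge 1$ by orthogonality of the $\pi_j$. That $\mathcal{N}=\pmb{A}-\mathcal{D}$ then follows from the $\mathcal{P}$-cf together with parts 2 and 7 (namely $N(A)=\pmb{0}_0\pi_0+N$), so in particular $\pmb{A}=\mathcal{N}+\mathcal{D}$.

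Next I would establish the three structural properties, read in $\mathbf{M_q}(\mathcal{C}_F)$. For diagonalizability of $\mathcal{D}$: writing $D=P\,\mathrm{diag}(d_1,\dots,d_q)\,P^{-1}$ with $P\in\mathbf{M_q}(F)$ and regarding $P$ as a constant sequence matrix (hence invertible in $\mathbf{M_q}(\mathcal{C}_F)$), one gets $\mathcal{D}=P\,\mathrm{diag}((d_1^k)_k,\dots,(d_q^k)_k)\,P^{-1}$, so $\mathcal{D}$ is conjugate over $\mathcal{C}_F$ to a diagonal matrix. For nilpotency of $\mathcal{N}$: since $D$ and $N_0$ commute, $A^k-D^k=N_0B_k$ for a matrix $B_k$ commuting with $N_0$, so with $\tau=\max\{t_0,\dots,t_p\}$ the nilpotency index of $N_0$ we obtain $(A^k-D^k)^{\tau}=N_0^{\tau}B_k^{\tau}=0$ for every $k$, i.e.\ $\mathcal{N}^{\tau}=0$. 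Commutativity is immediate: $A$ and $D$ commute, hence so do $A^k$ and $D^k$, hence $\mathcal{N}\mathcal{D}=\mathcal{D}\mathcal{N}$.

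The real content is uniqueness, and the main obstacle is that \emph{nilpotent} and \emph{diagonalizable} are being used over the ring $\mathcal{C}_F$ rather than over a field. I would resolve this through the canonical identification $\mathbf{M_q}(\mathcal{C}_F)\cong\prod_{k\ge 0}\mathbf{M_q}(F)$, under which a sequence matrix corresponds to its sequence of entrywise evaluations. Suppose $\pmb{A}=\mathcal{N}'+\mathcal{D}'$ is a second decomposition with $\mathcal{N}'$ nilpotent, $\mathcal{D}'$ diagonalizable and $\mathcal{N}'\mathcal{D}'=\mathcal{D}'\mathcal{N}'$. The relation $\mathcal{N}'^{\,s}=0$ forces each component $N'_k$ to be nilpotent over $F$; and if $Q^{-1}\mathcal{D}'Q$ is diagonal with $Q$ invertible over $\mathcal{C}_F$, then each component $Q_k$ is invertible over $F$, whence each $D'_k$ is diagonalizable over $F$; commutativity holds componentwise. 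Thus for every $k$, $A^k=N'_k+D'_k$ is an additive Jordan--Chevalley decomposition over the field $F$.

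To conclude I would invoke field-level uniqueness. Since $m_A$ splits over $F$, the minimal polynomial of each $A^k$ also splits over $F$ (its roots are $k$-th powers of the roots of $m_A$, hence lie in $F$), so the semisimple part of $A^k$ is a polynomial in $A^k$ via the Chinese Remainder Theorem, and the classical argument that a matrix which is simultaneously semisimple and nilpotent must vanish applies over $F$ without any perfectness assumption. Comparing with the already-verified decomposition $A^k=D^k+(A^k-D^k)$ into commuting semisimple and nilpotent parts, uniqueness over $F$ gives $D'_k=D^k$ and $N'_k=A^k-D^k$ for all $k$, that is $\mathcal{D}'=\mathcal{D}$ and $\mathcal{N}'=\mathcal{N}$. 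The delicate point to keep honest is precisely the descent from the ring-level hypotheses to componentwise diagonalizability over $F$ together with a \emph{uniform} nilpotency index; once that is in place, the field-theoretic uniqueness does all the remaining work.
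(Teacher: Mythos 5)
Your proposal, as written, proves only item 8 of the proposition; items 1--7 are not proved but are used as inputs. You invoke item 1 to evaluate $\pmb{0}_{0}\pi_{0}+\mathcal{A}_{0}\Lambda_{0}$ at index $k$, and items 2 and 7 to write $N(A)=\pmb{0}_{0}\pi_{0}+N$, yet these identities are exactly what the proposition asserts about the (a priori abstract) $\mathcal{P}$-cf coefficients of $\pmb{A}$. The missing computational core is the spectral expansion: from $I_{q}=\pi_{0}+\pi_{1}+\cdots+\pi_{p}$ and $A\pi_{j}=(A-\lambda_{j}I_{q})\pi_{j}+\lambda_{j}\pi_{j}$, together with $(A-\lambda_{j}I_{q})^{t_{j}}\pi_{j}=0$ and $A^{t_{0}}\pi_{0}=0$, one expands
\begin{equation*}
A^{k}=\sum_{i=0}^{t_{0}-1}0^{k-i}\binom{k}{i}A^{i}\pi_{0}
+\sum_{j=1}^{p}\sum_{i=0}^{s}\lambda_{j}^{k-i}\binom{k}{i}(A-\lambda_{j}I_{q})^{i}\pi_{j},
\end{equation*}
and identifies the coefficients through the uniqueness of the $\mathcal{P}$-cf; this is what yields items 1 and 2, and only with them does your identification $\mathcal{D}=(D^{k})_{k\geqslant0}$, $\mathcal{N}=(A^{k}-D^{k})_{k\geqslant0}$ acquire a basis. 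Likewise absent are the \enquote{only if} direction of item 4 (the paper argues that $\mathcal{A}_{i}=0$ forces $E^{j}\subseteq\ker(A-\lambda_{j}I_{q})^{i}$ for every generalized eigenspace $E^{j}$, hence $i\geqslant t_{j}$ for all $j$), item 5, and item 6 (each $\mathcal{A}_{i}$, $i\geqslant1$, is a sum of commuting nilpotent matrices). Judged against the full statement, this is a genuine gap: seven of the eight claims, including the ones carrying the actual content of the $\mathcal{P}$-canonical form, are assumed rather than established.

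The part you do prove, item 8, is handled correctly and by essentially the same route as the paper: both arguments recognize the two blocks as the componentwise Jordan--Chevalley decomposition $A^{k}=A_{s}^{k}+B_{k}$, with diagonalizability of $(A_{s}^{k})_{k}$ witnessed by a single constant conjugating matrix. Where you go beyond the paper is uniqueness: the paper merely asserts that $A^{k}=B_{k}+A_{s}^{k}$ is the Jordan--Chevalley decomposition of each $A^{k}$ and leaves the sequence-level descent implicit, whereas you make explicit the identification $\mathbf{M_{q}}(\mathcal{C}_{F})\cong\prod_{k\geqslant0}\mathbf{M_{q}}(F)$, the need for a \emph{uniform} nilpotency index, the componentwise invertibility of the conjugating matrix, and the fact that field-level uniqueness requires the semisimple part of $A^{k}$ to be a polynomial in $A^{k}$ (available via CRT because the minimal polynomial of $A^{k}$ splits over $F$). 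That added care is a real improvement on the paper's terse treatment of item 8; but to turn the proposal into a proof of the proposition, you must prepend the spectral computation above and supply the short arguments for items 3--7.
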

\begin{proof}~
 \\1. and 2. We know that $$I_{q}=\pi_{0}+\pi_{1}+\cdots+\pi_{p},$$ then $$A=(A\pi_{0})\pi_{0}+(A\pi_{1})\pi_{1}+\cdots+(A\pi_{p})\pi_{p}$$ and hence $$A^{k}=(A\pi_{0})^{k}\pi_{0}+(A\pi_{1})^{k}\pi_{1}+\cdots+(A\pi_{p})^{k}\pi_{p}$$ for all $k\geqslant 0$. But since $A\pi_{i}=(A-\lambda_{i}I_{q})\pi_{i}+\lambda_{i}\pi_{i}$ and $(A-\lambda_{i}I_{q})^{t_{i}}\pi_{i}=0$, we see that $$A^{k}=\sum_{i=0}^{t_{0}-1}0^{k-i}\binom{k}{i}A^{i}\pi_{0}+\sum_{i=1}^{p}(\sum_{j=0}^{s}\lambda_{i}^{k-j}(A-\lambda_{i}I_{q})^{j}\pi_{i}\binom{k}{j})$$
for all $k\geqslant 0$ and $s\geqslant \max\{t_{1}-1,\ldots,t_{p}-1\}$.
Therefore, $$\pmb{A}= \pmb{N}(A) + \pmb{\mathcal{A}}_{0}\pmb{\Lambda}_{0} + \cdots + \pmb{\mathcal{A}}_{s}\pmb{\Lambda}_{s},$$ where $$\pmb{\mathcal{A}}_{i}=\displaystyle\sum_{j=1}^{p}\pmb{\lambda}_{j}\lambda_{j}^{-i}(A-\lambda_{j}I_{q})^{i}\pi_{j}, 1\leqslant i\leqslant s,$$
$$\pmb{\mathcal{A}}_{0}=\pmb{\lambda}_{1}\pi_{1}+\cdots+\pmb{\lambda}_{p}\pi_{p}$$ and $$\pmb{N}(A)=\displaystyle\sum_{i=0}^{t_{0}-1}\pmb{0}_{i}A^{i}\pi_{0}.$$
\\3. Is evident.
\\4. Suppose $\pmb{\mathcal{A}}_{i}=0$. Then we have $\displaystyle\sum_{j=1}^{p}\pmb{\lambda}_{j}\lambda_{j}^{-i}(A-\lambda_{j}I_{q})^{i}\pi_{j}=0$. Multiplying by $\pi_{r}$, we get $\pmb{\lambda}_{r}\lambda_{r}^{-i}(A-\lambda_{r}I_{q})^{i}\pi_{r}=0$ and hence $(A-\lambda_{r}I_{q})^{i}\pi_{r}=0$. The result now follows from the well-known fact that $t_{r}$ is the smallest nonnegative integer satisfying $$(A-\lambda_{r}I_{q})^{t_{r}}\pi_{r}=0.$$
\\5. Is evident.
\\6. Follows from the fact that if $i\geqslant 1$ then $\pmb{\mathcal{A}}_{i}$ is a sum of commuting nilpotent matrices.
\\7. Is evident.
\\8. Let $A=A_{n}+A_{s}$ be the Jordan-Chevalley decomposition of $A$. Then $A^{k}=(A_{n}+A_{s})^{k}=B_{k}+A^{k}_{s}$ where $B_{k}$ is a matrix which commutes with $A_{n}$ and $A_{s}$. More precisely, $A^{k}=B_{k}+A^{k}_{s}$ is the Jordan-Chevalley decomposition of $A^{k}$. Since $(A^{k}_{s})_{k}=\pmb{0}_{0}\pi_{0}+\pmb{\mathcal{A}}_{0}\pmb{\Lambda}_{0}$ it follows that $(B_{k})_{k}=\pmb{N} + \pmb{\mathcal{A}}_{1}\pmb{\Lambda}_{1} + \cdots + \pmb{\mathcal{A}}_{m}\pmb{\Lambda}_{m}$. The fact that the matrices $A_{s}^{k}, k\geqslant 0$ are simultaneously diagonalizable guarantees that the sequence of matrices $(A_{s}^{k})_{k}$ is diagonalizable. The unicity of the decomposition follows from the fact that, for every $k\in \mathbb{N}$, the matrix $A^{k}$ can be uniquely expressed as the sum of commuting diagonalizable and nilpotent matrices.
\end{proof}
\begin{rmk}
In the above (and in the rest of this paper) we have identified sequences of matrices and their associated matrices of sequences.
\end{rmk}
\begin{rmk}
$A$ is invertible if and only if $\pmb{N}(A)=0$.
\end{rmk}
\begin{rmk}
If we plug $k=0$ into $(\pmb{\mathcal{A}}_{0}\pmb{\Lambda}_{0} + \cdots + \pmb{\mathcal{A}}_{m}\pmb{\Lambda}_{m})_{k}$, we get $\pi_{1}+\cdots+\pi_{p}=I_{q}-\pi_{0}$. Thus, if we have already determined the geometric part of a matrix $A$, then we can easily check whether or not $A$ is nonsingular. For this, it is sufficient to plug $k=0$ into the geometric part of $A$ and see if we get the identity matrix or not.
\end{rmk}
We have the following corollaries
\begin{cor}\label{Cor 21} Let $A$ be a square matrix over $F$ and let $t_{0}$ be its index. Suppose there exists a purely geometric sequence $A(k)_{k\geq 0}$ and a nonnegative integer $\tau$ such that $A^{k}=A(k)$ for all $k\geq \tau$. Then we have
\\1. $t_{0}\leq \tau$.
\\2. $t_{0}$ is the smallest nonnegative integer for which $A^{t_{0}}=A(t_{0})$. Furthermore, we have $A^{k}=A(k)$ for all $k\geq t_{0}$.
\end{cor}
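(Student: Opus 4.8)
The plan is to exploit the uniqueness of the $\mathcal{P}$-canonical form of $\pmb{A}=(A^{k})_{k\geqslant 0}$ together with the explicit description of its non-geometric part furnished by Proposition~\ref{Prop 23}.2. First I would record the elementary identity
$$\pmb{A}=\sum_{i=0}^{\tau-1}(A^{i}-A(i))\pmb{0}_{i}+(A(k))_{k\geqslant 0},$$
which holds because the two sides agree term by term: for $k\geqslant \tau$ the finite sum vanishes and $A^{k}=A(k)$ by hypothesis, while for $k<\tau$ the sum contributes exactly $A^{k}-A(k)$. Since the finite sum lies in $\mathbf{M_{q}}(F_{\mathcal{S}^{\circ}})$ and $(A(k))_{k}$ is purely geometric, the uniqueness of the $\mathcal{P}$-cf (the matrix analogue of Lemma~\ref{lem 33}, applied entrywise, using $\mathbf{M_{q}}(F_{\mathcal{S}})_{\mathcal{T}}=\mathbf{M_{q}}(F_{\mathcal{S}^{\circ}})\oplus\mathbf{M_{q}}(F_{\mathcal{S}^{\ast}})_{\mathcal{T}}$) identifies $\sum_{i=0}^{\tau-1}(A^{i}-A(i))\pmb{0}_{i}$ as the non-geometric part $N(A)$ of $\pmb{A}$ and $(A(k))_{k}$ as its geometric part.

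Next I would compare this expression for $N(A)$ with the one given by Proposition~\ref{Prop 23}.2, namely $N(A)=\sum_{i=0}^{t_{0}-1}\pmb{0}_{i}A^{i}\pi_{0}$. Because $\{\pmb{0}_{i}\}$ is a basis of $F_{\mathcal{S}^{\circ}}$, I can match the coefficient of each $\pmb{0}_{i}$, obtaining $A^{i}-A(i)=A^{i}\pi_{0}$ for $0\leqslant i\leqslant t_{0}-1$ and $A^{i}=A(i)$ whenever $i\geqslant t_{0}$. The decisive input is that $A^{i}\pi_{0}=(A\pi_{0})^{i}$ is nonzero precisely for $0\leqslant i\leqslant t_{0}-1$: since $A\pi_{0}$ is nilpotent of index exactly $t_{0}$ on the range of $\pi_{0}$, if some power $(A\pi_{0})^{i}$ with $i<t_{0}$ vanished, then $(A\pi_{0})^{t_{0}-1}=(A\pi_{0})^{t_{0}-1-i}(A\pi_{0})^{i}$ would vanish too, which is impossible.

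For Part 1, I would argue by contradiction: if $\tau<t_{0}$, then the index $i=t_{0}-1$ satisfies $i\geqslant\tau$, so the coefficient of $\pmb{0}_{t_{0}-1}$ in the first expression for $N(A)$ is zero, whereas in Proposition~\ref{Prop 23}.2 it equals $A^{t_{0}-1}\pi_{0}\neq 0$; this contradicts uniqueness, whence $t_{0}\leqslant\tau$. For Part 2, the coefficient matching directly yields $A^{i}\neq A(i)$ for every $i<t_{0}$, since $A^{i}-A(i)=(A\pi_{0})^{i}\neq 0$, and it yields $A^{k}=A(k)$ for every $k\geqslant t_{0}$ by combining the matched relations on the range $t_{0}\leqslant i\leqslant\tau-1$ with the hypothesis for $k\geqslant\tau$; this is exactly the minimality assertion. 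I expect the only delicate point to be the careful bookkeeping of which basis index $\pmb{0}_{i}$ carries a nonzero coefficient, and the nilpotency-index argument showing $(A\pi_{0})^{i}\neq 0$ for all $i<t_{0}$ is the linear-algebraic heart of the matter; everything else reduces to the uniqueness of the canonical form.
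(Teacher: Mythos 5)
Your proposal is correct and follows essentially the same route as the paper: both identify the non-geometric part $N(A)$ of $\pmb{A}$ via Lemma~\ref{lem 33} as $\sum_{i=0}^{\tau-1}(A^{i}-A(i))\pmb{0}_{i}$, compare it with the expression $N(A)=\sum_{i=0}^{t_{0}-1}\pmb{0}_{i}A^{i}\pi_{0}$ from Proposition~\ref{Prop 23}, and match coefficients of the basis $\{\pmb{0}_{i}\}$ using the fact that $A^{i}\pi_{0}\neq 0$ exactly for $i<t_{0}$. Your extra touches (the explicit term-by-term verification of the identity, the contradiction phrasing of Part 1, and the one-line nilpotency argument for $A^{i}\pi_{0}\neq 0$) only spell out steps the paper asserts directly.
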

\proof
On the one hand, according to Lemma~\ref{lem 33}, $$\pmb{N}(A)=(I-A(0))\pmb{0}_{0}+(A-A(1))\pmb{0}_{1}+\cdots+(A^{\tau-1}-A(\tau-1))\pmb{0}_{\tau-1}.$$
On the other hand, according to Proposition~\ref{Prop 23},
$$\pmb{N}(A)=\displaystyle\sum_{i=0}^{t_{0}-1}\pmb{0}_{i}A^{i}\pi_{0}.$$
Therefore, it follows that $t_{0}\leq \tau$ Since $t_{0}$ is the smallest nonnegative integer for which $A^{t_{0}}\pi_{0}=0$. Furthermore, the unicity of the representation of $\pmb{N}(A)$ ensures that $A^{i}-A(i)=A^{i}\pi_{0}\neq 0$, $0\leq i\leq t_{0}-1$ and $A^{i}-A(i)=0$, $t_{0}\leq i\leq \tau$. Therefore,  $t_{0}$ is the smallest nonnegative integer for which $A^{t_{0}}=A(t_{0})$. The last assertion of (2) is also immediate
after the previous discussion. This completes the proof of the corollary.
\endproof
\begin{cor}\label{Cor 22} Let $A$ be a square matrix over $F$. Then the minimal polynomial of $A$ is $m_{A}(X)=X^{t_{0}}\prod_{j=1}^{p}(X-\lambda_{j})^{t_{j}}$, where $t_{0}$ and $t_{j}, j\neq 0$, are, respectively, the greatest integers such that $\pmb{0}_{t_{0}-1}$ and $\pmb{\lambda}_{j}\pmb{\Lambda}_{t_{j}-1}$ appear in the $\mathcal{P}$-cf of $A$ relative to $(\mathcal{S}^{\ast}, \mathcal{T})$.
\end{cor}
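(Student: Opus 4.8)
The plan is to read the exponents of $m_A$ directly off the explicit $\mathcal{P}$-cf computed in Proposition~\ref{Prop 23}. First I would recall that by that proposition the $\mathcal{P}$-cf of $\pmb{A}$ is
$$\pmb{A}=\sum_{i=0}^{t_{0}-1}\pmb{0}_{i}A^{i}\pi_{0}+\sum_{i=0}^{m}\mathcal{A}_{i}\Lambda_{i},\qquad \mathcal{A}_{i}=\sum_{j=1}^{p}\pmb{\lambda}_{j}\lambda_{j}^{-i}(A-\lambda_{j}I_{q})^{i}\pi_{j}.$$
Expanding the $\mathcal{A}_{i}$ against the distinct geometric sequences $\pmb{\lambda}_{1},\ldots,\pmb{\lambda}_{p}$ rewrites this as an expansion of $\pmb{A}$ on the linearly independent family $\{\pmb{0}_{i}\}\cup\{\pmb{\lambda}_{j}\Lambda_{i}\}$ (whose independence is exactly what underlies Proposition~\ref{Prop 11}), in which the coefficient of $\pmb{0}_{i}$ is the matrix $A^{i}\pi_{0}$ and the coefficient of $\pmb{\lambda}_{j}\Lambda_{i}$ is the matrix $\lambda_{j}^{-i}(A-\lambda_{j}I_{q})^{i}\pi_{j}$. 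By the uniqueness of the $\mathcal{P}$-cf, saying that $\pmb{0}_{i}$ (resp. $\pmb{\lambda}_{j}\Lambda_{i}$) \emph{appears} in the $\mathcal{P}$-cf of $A$ means precisely that its coefficient $A^{i}\pi_{0}$ (resp. $(A-\lambda_{j}I_{q})^{i}\pi_{j}$) is nonzero.

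The second step is to identify the largest index for which each of these coefficients survives. Because $t_{0}$ is the index of $A$, i.e. the least integer with $A^{t_{0}}\pi_{0}=0$, the matrices $A^{i}\pi_{0}$ vanish exactly for $i\geqslant t_{0}$ while $A^{t_{0}-1}\pi_{0}\neq0$; hence $t_{0}-1$ is the greatest integer $i$ for which $\pmb{0}_{i}$ appears. Likewise, since $t_{j}$ is the multiplicity of $X-\lambda_{j}$ in $m_{A}$, one has $(A-\lambda_{j}I_{q})^{t_{j}}\pi_{j}=0$ while $(A-\lambda_{j}I_{q})^{t_{j}-1}\pi_{j}\neq0$ (otherwise $m_{A}$ would carry a strictly smaller exponent at $\lambda_{j}$), so $(A-\lambda_{j}I_{q})^{i}\pi_{j}$ vanishes exactly for $i\geqslant t_{j}$ and $t_{j}-1$ is the greatest integer $i$ for which $\pmb{\lambda}_{j}\Lambda_{i}$ appears.

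Combining the two steps, the greatest integers attached to $\pmb{0}$ and to each $\pmb{\lambda}_{j}$ in the $\mathcal{P}$-cf are $t_{0}-1$ and $t_{j}-1$, so the exponents of $X$ and of $X-\lambda_{j}$ in $m_{A}=X^{t_{0}}\prod_{j=1}^{p}(X-\lambda_{j})^{t_{j}}$ are recovered as one more than those greatest integers, which is the assertion. I expect the only delicate point to be the bookkeeping that justifies reading each coefficient off a single basis sequence $\pmb{0}_{i}$ or $\pmb{\lambda}_{j}\Lambda_{i}$; this rests entirely on the linear independence established in Proposition~\ref{Prop 11}, so once that identification is made the statement is immediate.
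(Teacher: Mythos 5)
Your proposal is correct and follows essentially the same route as the paper: the paper's own proof likewise reduces the corollary to the facts that $t_{0}$ and $t_{j}$ are the smallest nonnegative integers with $A^{t_{0}}\pi_{0}=0$ and $(A-\lambda_{j}I_{q})^{t_{j}}\pi_{j}=0$, read against the explicit coefficients $A^{i}\pi_{0}$ and $\lambda_{j}^{-i}(A-\lambda_{j}I_{q})^{i}\pi_{j}$ from Proposition~\ref{Prop 23}. Your write-up merely makes explicit the bookkeeping (uniqueness of the $\mathcal{P}$-cf via the linear independence from Proposition~\ref{Prop 11}) that the paper leaves implicit.
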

\proof
The proof of the Corollary follows immediately from the fact that $t_{0}$ and $t_{j}$, $1\leq j \leq p$ are the the smallest nonnegative integer for which $A^{t_{0}}\pi_{0}=0$ and $(A-\lambda_{j}I_{q})^{t_{j}}\pi_{j}=0$.
\endproof
\begin{cor}\label{Cor 23} Let $A$ be a square matrix over $F$ such that its characteristic polynomial splits over $F$. Then $A$ is diagonalizable if and only if the geometric part of $A$ is an element of $M_{q}(F_{\mathcal{S}^{\ast}})$ and the non-geometric part of $A$ is an element of $\pmb{0}_{0}M_{q}(F)$.
\end{cor}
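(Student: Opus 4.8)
The plan is to reduce both stated membership conditions to constraints on the exponents appearing in the minimal polynomial $m_{A}(X)=X^{t_{0}}\prod_{j=1}^{p}(X-\lambda_{j})^{t_{j}}$, and then match these with the standard diagonalizability criterion. Since the characteristic polynomial of $A$ splits over $F$, so does $m_{A}$, and $A$ is diagonalizable precisely when $m_{A}$ is square-free, i.e., when $t_{0}\leqslant 1$ and $t_{j}=1$ for every $j\in\{1,\ldots,p\}$. Thus the whole corollary amounts to showing that the two membership conditions are jointly equivalent to these constraints on the $t_{i}$.

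For the geometric part I would first note that $\Lambda_{0}=(1)_{k}$ is the constant sequence, so $\mathcal{A}_{0}\Lambda_{0}=\mathcal{A}_{0}\in\mathbf{M_{q}}(F_{\mathcal{S}^{\ast}})$ by Proposition~\ref{Prop 23}.1, whereas for $i\geqslant 1$ the sequence $\Lambda_{i}$ is not geometric. By the $F$-linear disjointness of $F\langle\mathcal{T}\rangle$ and $F_{\mathcal{S}^{\ast}}$ (Proposition~\ref{Prop 11}.3) and the uniqueness of the $\mathcal{P}$-cf, the geometric part $\mathcal{A}_{0}\Lambda_{0}+\cdots+\mathcal{A}_{m}\Lambda_{m}$ lies in $\mathbf{M_{q}}(F_{\mathcal{S}^{\ast}})$ if and only if $\mathcal{A}_{1}=\cdots=\mathcal{A}_{m}=0$. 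By Proposition~\ref{Prop 23}.4 and \ref{Prop 23}.5 (equivalently by Corollary~\ref{Cor 22}) this holds exactly when $m=\max\{t_{1}-1,\ldots,t_{p}-1\}=0$, that is, when $t_{j}=1$ for all $j\geqslant 1$; the nilpotent case $p=0$ is included trivially since then the geometric part reduces to $0$.

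For the non-geometric part I would use $N(A)=\sum_{i=0}^{t_{0}-1}\pmb{0}_{i}A^{i}\pi_{0}$ from Proposition~\ref{Prop 23}.2 together with the linear independence of the $\pmb{0}_{i}$. Consequently $N(A)\in\pmb{0}_{0}\mathbf{M_{q}}(F)$, i.e., only the $\pmb{0}_{0}$ term survives, if and only if $\pmb{0}_{i}A^{i}\pi_{0}=0$ for all $i\geqslant 1$, which by the minimality of $t_{0}$ as the least integer with $A^{t_{0}}\pi_{0}=0$ happens exactly when $t_{0}\leqslant 1$ (again matching Corollary~\ref{Cor 22}).

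Combining the two: the geometric part lies in $\mathbf{M_{q}}(F_{\mathcal{S}^{\ast}})$ iff $t_{j}=1$ for all $j\geqslant 1$, and the non-geometric part lies in $\pmb{0}_{0}\mathbf{M_{q}}(F)$ iff $t_{0}\leqslant 1$; taken together these are precisely the square-freeness of $m_{A}$, hence the diagonalizability of $A$. The only delicate points I expect are bookkeeping ones: distinguishing $t_{0}=0$ (where $0$ is not an eigenvalue and $N(A)=0$) from $t_{0}=1$, and noting that both fall under the condition $N(A)\in\pmb{0}_{0}\mathbf{M_{q}}(F)$; and handling the nilpotent and eigenvalue-free edge cases so that the \enquote{greatest integer such that $\ldots$ appears} language of Corollary~\ref{Cor 22} is applied correctly. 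No genuinely hard step arises, since the entire content lies in translating the uniqueness of the $\mathcal{P}$-cf into statements about the exponents $t_{i}$.
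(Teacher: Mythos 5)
Your proposal is correct and takes essentially the same route as the paper, whose entire proof is the remark that the corollary is a direct consequence of Proposition~\ref{Prop 23}: your argument simply fills in the details of that deduction (using Proposition~\ref{Prop 23}, the uniqueness of the $\mathcal{P}$-cf, and the square-free minimal polynomial criterion for diagonalizability), including the $t_{0}=0$ versus $t_{0}=1$ and nilpotent edge cases.
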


The Jordan-Chevalley decomposition of a matrix $A$ can certainly be used to determine the $\mathcal{P}$-cf of $A$ relative to $(\mathcal{S}^{\ast}, \mathcal{T})$. The corollary below shows that the converse is also true.
\begin{cor}\label{Cor 11}
Let $A$ be a square matrix over $F$. Then the diagonalizable part of $A$ is $A_{s}=\pmb{\mathcal{A}}_{0}(1)$ and the nilpotent part of $A$ is $A_{n}=A-A_{s}=\pmb{N}(A)(1)+\pmb{\mathcal{A}}_{1}(1)$, where $\pmb{\mathcal{A}}_{0}$ and $\pmb{\mathcal{A}}_{1}$ are the $0$th and the $1$th coordinates of $A$ with respect to $(\mathcal{S}^{\ast}, \mathcal{T})$.
\end{cor}
\proof Follows immediately from Property~$8$ of Proposition~\ref{Prop 23}.
\endproof

As an illustration, consider the following example taken from~\cite{Male}.
\begin{ex}\label{example 1}
Let $$A=\begin{pmatrix}
1&1&1&0\\1&1&1&-1\\0&0&-1&1\\0&0&1&-1
\end{pmatrix}$$
and let $A(k)$ be the sequence of matrices
$$A(k)=\begin{pmatrix}
2^{-1+k}&2^{-1+k}&\frac{1}{16}2^{k}((-1)^{1+k}+5)&\frac{1}{16}2^{k}((-1)^{k}-1)\vspace*{0.1pc}\\
2^{-1+k}&2^{-1+k}&\frac{5}{16}2^{k}((-1)^{1+k}+1)&\frac{1}{16}2^{k}(5(-1)^{k}-1)\vspace*{0.1pc}\\
0&0&(-1)^{k}2^{-1+k}&(-1)^{1+k}2^{-1+k}\vspace*{0.1pc}\\
0&0&(-1)^{1+k}2^{-1+k}&(-1)^{k}2^{-1+k}
\end{pmatrix}.$$
It is proven in~\cite{Male} that $A^{k}=A(k)$ for all $k\geq 4$.
\\We have $A(0)\neq I_{4}$, then $A$ is singular.
\\We also have $A(1)\neq A$, $A(2)=A^{2}$. Then $A^{k}=A(k)$ for all $k\geq 2$.
\\From these results, we may conclude that
\begin{itemize}
\item The non-geometric part of $A$ is $\pmb{N}(A)=(I_{4}-A(0))\pmb{0}_{0}+(A-A(1))\pmb{0}_{1}$.
\item The geometric part of $A$ is
$$\begin{pmatrix}
2^{-1}(2^{k})&2^{-1}(2^{k})&\frac{5}{16}(2^{k})-\frac{1}{16}((-2)^{k})&\frac{-1}{16}(2^{k})+\frac{1}{16}((-2)^{k})\vspace*{0.1pc}\\
2^{-1}(2^{k})&2^{-1}(2^{k})&\frac{5}{16}(2^{k})-\frac{5}{16}((-2)^{k})&\frac{-1}{16}(2^{k})+\frac{5}{16}((-2)^{k})\vspace*{0.1pc}\\
0&0&2^{-1}((-2)^{k})&-2^{-1}((-2)^{k})\vspace*{0.1pc}\\
0&0&-2^{-1}((-2)^{k})&2^{-1}((-2)^{k})
\end{pmatrix}$$
\item The minimal polynomial of $A$ is $X^{2}(X-2)(X+2)$.
\item $A_{s}=\pmb{\mathcal{A}}_{0}(1)=A(1)$ and $A_{n}=\pmb{N}(A)(1)=A-A(1)$
\item
\begin{equation*}
\pi_{0}=I_{4}-A(0)=\begin{pmatrix}
2^{-1}&-2^{-1}&-\frac{1}{4}&0\vspace*{0.1pc}\\
-2^{-1}&2^{-1}&0&\frac{-1}{4}\vspace*{0.1pc}\\
0&0&2^{-1}&2^{-1}\vspace*{0.1pc}\\
0&0&2^{-1}&2^{-1}
\end{pmatrix},\,
\pi_{2}=\begin{pmatrix}
2^{-1}&2^{-1}&\frac{5}{16}&\frac{-1}{16}\vspace*{0.1pc}\\
2^{-1}&2^{-1}&\frac{5}{16}&\frac{-1}{16}\vspace*{0.1pc}\\
0&0&0&0\vspace*{0.1pc}\\
0&0&0&0
\end{pmatrix},
\end{equation*}
\begin{equation*}
\pi_{-2}=\begin{pmatrix}
0&0&\frac{-1}{16}&\frac{1}{16}\vspace*{0.1pc}\\
0&0&\frac{-5}{16}&\frac{5}{16}\vspace*{0.1pc}\\
0&0&2^{-1}&-2^{-1}\vspace*{0.1pc}\\
0&0&-2^{-1}&2^{-1}
\end{pmatrix}
\end{equation*}
\end{itemize}
\end{ex}
\section{$\mathcal{P}$-canonical forms of the Drazin inverses of matrices}
Consider the following linear map
\begin{align*}
\theta: F_{\mathcal{S}^{\ast}} &\longrightarrow F_{\mathcal{S}^{\ast}}\\
\pmb{\lambda} &\longmapsto \pmb{\lambda}^{-1}
\end{align*}
Obviously, $\theta(\mathcal{S}^{\ast})=\mathcal{S}^{\ast}$ and $\theta^{-1}=\theta$. Since $\theta(\pmb{\lambda}\pmb{\mu})=\theta(\pmb{\lambda})\theta(\pmb{\mu})$ for all $\pmb{\lambda}, \pmb{\mu}\in \mathcal{S}^{\ast}$, it follows that $\theta$ is an $F$-algebra automorphism of $F_{\mathcal{S}^{\ast}}$.
\\Let \begin{align*}
\Psi: F_{\mathcal{S}^{\circ}} \longrightarrow F_{\mathcal{S}^{\circ}}
\end{align*}
be any $F$-endomorphism of $F_{\mathcal{S}^{\circ}}$ and consider
\begin{align*}
\theta_{\Psi}=\Psi\oplus \theta: F_{\mathcal{S}} \longrightarrow F_{\mathcal{S}}
\end{align*}
The direct sum of $\theta$ and $\Psi$. It is obvious that $\theta_{\Psi}$ is an $F$-endomorphism of $F_{\mathcal{S}}$, involutory if $\Psi$ is, but it is not necessarily an algebra homomorphism; in fact, we have $\pmb{\lambda}\pmb{0}_{i}=\lambda^{i}\pmb{0}_{i}$, then $\theta_{\Psi}(\pmb{\lambda}\pmb{0}_{i})=\lambda^{i}\Psi(\pmb{0}_{i})$ which is not always equal to $\theta_{\Psi}(\pmb{\lambda})\theta_{\Psi}(\pmb{0}_{i})=\pmb{\lambda}^{-1}\Psi(\pmb{0}_{i})$.
\\Let us denote by $\widetilde{\Psi}$ the $F$-endomorphism
\begin{align*}
\widetilde{\Psi}: M_{q}(F_{\mathcal{S}^{\circ}}) &\longrightarrow M_{q}(F_{\mathcal{S}^{\circ}})\\
(A_{ij}) &\longmapsto (\Psi(A_{ij}))
\end{align*}
\\Let us also denote by $\widetilde{\theta}$ the $F$-algebra automorphism
\begin{align*}
\widetilde{\theta}: M_{q}(F_{\mathcal{S}^{\ast}}) &\longrightarrow M_{q}(F_{\mathcal{S}^{\ast}})\\
(A_{ij}) &\longmapsto (\theta(A_{ij}))
\end{align*}
Note that $\widetilde{\theta}$ is an involution of $M_{q}(F_{\mathcal{S}^{\ast}})$.
\\Let $\widetilde{\theta}_{\Psi}$ denote the linear endomorphism induced by $\widetilde{\Psi}\oplus \widetilde{\theta}$ on
$$\mathcal{L}_{q}(F)=
M_{q}(F_{\mathcal{S}^{\circ}})\oplus \mathcal{G}_{q}(F)$$
defined by
\begin{align*}
\widetilde{\theta}_{\Psi}: \mathcal{L}_{q}(F) &\longrightarrow \mathcal{L}_{q}(F)\\
\pmb{U} &\longmapsto \mathcal{V}_{0}\Psi(\pmb{0}_{0})+\cdots+\mathcal{V}_{n}\Psi(\pmb{0}_{n})+
\widetilde{\theta}(\pmb{\mathcal{W}}_{0})\widetilde{\pmb{\Lambda_{0}}}+\cdots+\widetilde{\theta}(\pmb{\mathcal{W}}_{m})\widetilde{\pmb{\Lambda_{m}}},
\end{align*}
where $$\mathcal{V}_{0}\pmb{0}_{0}+\cdots+\mathcal{V}_{n}\pmb{0}_{n}+\pmb{\mathcal{W}}_{0}\pmb{\Lambda}_{0}+\cdots+\pmb{\mathcal{W}}_{m}\pmb{\Lambda}_{m}$$
is the $\mathcal{P}$-cf of $\pmb{U}$ relative to $(\mathcal{S}^{\ast}, \mathcal{T})$. The map $\widetilde{\theta}_{\Psi}$ is well-defined, since $\widetilde{\pmb{\Lambda}_{i}}\in F\langle\mathcal{T}\rangle$ for all non-negative integer $i$ (see Proposition~\ref{Prop 11}).
\\In the remainder of this section, we will only be interested in the special case when $\Psi$ is the zero map. Let us denote  the map $$0\oplus \theta: F_{\mathcal{S}} \longrightarrow F_{\mathcal{S}}$$
simply by $\theta_{0}$. It should be noted that the image of a sequence $\pmb{u}$ of $F_{\mathcal{S}}$ under $\theta_{0}$ can be obtained by simply plugging in $-k$ for $k$ in the geometric part of $\pmb{u}$ relative to $(\mathcal{S}^{\ast}, \mathcal{T})$ and neglecting its non-geometric part.
\\Let $A$ be a square matrix of index $t_{0}$ with characteristic polynomial splits over $F$. We may assume that the Jordan canonical form of $A$ has the form as follows
$$A=P\begin{pmatrix} D&0\\ 0&N \end{pmatrix}P^{-1}$$
where $P$ is a nonsingular matrix, $D$ is a nonsingular matrix of order $r=\text{rank}(A^{t_{0}})$, and $N$ is a
nilpotent matrix such that $N^{t_{0}}=0$. Then we can write the matrix $\pmb{A}$ in the form
$$\pmb{A}=P\begin{pmatrix} 0&0\\ 0&N^{k} \end{pmatrix}_{\hspace*{-0.2pc} k}P^{-1} + P\begin{pmatrix} D^{k}&0\\ 0&0 \end{pmatrix}_{\hspace*{-0.2pc} k}P^{-1}.$$
It is clear that these two matrices are the non-geometric and the geometric parts of $A$ written in a form other than the $\mathcal{P}$-cf. If we plug $-k$ for $k$ into this form of the geometric part of $A$, we get the matrix $\pmb{A}_{d}-\pi_{0}\pmb{0}_{0}$. In the theorem below, we show that the same remains true for the $\mathcal{P}$-cf of $A$ relative to $(\mathcal{S}^{\ast}, \mathcal{T})$.
\begin{thm}\label{thm 535}
Let $A$ be a square matrix of index $t_{0}$ with coefficients in $F$ such that its characteristic polynomial splits over $F$. Then we have $\pmb{A}_{d}=\widetilde{\theta}_{0}(\pmb{A})+\pmb{0}_{0}\pi_{0}$, where $\pmb{A}_{d}=(A_{d}^{k})_{k}$.
\end{thm}
\begin{proof} We must proof that
$$\begin{array}{rrlc}
\pmb{A}^{t_{0}+1}(\widetilde{\theta}_{0}(\pmb{A})+\pmb{0}_{0}\pi_{0})&=&\pmb{A}^{t_{0}}\quad\quad &(1^{t_{0}})\\
(\widetilde{\theta}_{0}(\pmb{A})+\pmb{0}_{0}\pi_{0})\pmb{A}(\widetilde{\theta}_{0}(\pmb{A})+
\pmb{0}_{0}\pi_{0})&=&\widetilde{\theta}_{0}(\pmb{A})+\pmb{0}_{0}\pi_{0} \quad\quad &(3)\\
\pmb{A}(\widetilde{\theta}_{0}(\pmb{A})+\pmb{0}_{0}\pi_{0})&=&(\widetilde{\theta}_{0}(\pmb{A})+\pmb{0}_{0}\pi_{0})\pmb{A} \quad\quad &(5)
\end{array}$$
From Proposition~\ref{Prop 23} it follows that
$$\pmb{A}=\displaystyle\sum_{i=0}^{t_{0}-1}\pmb{0}_{i}A^{i}\pi_{0} + \sum_{i=0}^{m}(\sum_{j=1}^{p}\pmb{\lambda}_{j}\lambda_{j}^{-i}(A-\lambda_{j}I_{q})^{i}\pi_{j})\pmb{\Lambda}_{i}$$ and then $$\widetilde{\theta}_{0}(\pmb{A})=
\displaystyle\sum_{i=0}^{m}(\sum_{j=1}^{p}\pmb{\lambda}^{-1}_{j}\lambda_{j}^{-i}(A-\lambda_{j}I_{q})^{i}\pi_{j})
\widetilde{\pmb{\Lambda}}_{i},$$ where $m=\max\{t_{1}-1,\ldots,t_{p}-1\}$.
\\Clearly, $\widetilde{\theta}_{0}(\pmb{A})+\pmb{0}_{0}\pi_{0}$ satisfies $(5)$.
\\In order to prove that $\widetilde{\theta}_{0}(\pmb{A})+\pmb{0}_{0}\pi_{0}$ satisfies $(3)$, observe first that $$\pmb{\mathcal{A}}_{i}\pmb{\Lambda}_{i}\widetilde{\theta}_{0}(\pmb{\mathcal{A}}_{l}\pmb{\Lambda}_{l})=
\displaystyle\sum_{j=1}^{p}\lambda_{j}^{-(i+l)}(A-\lambda_{j}I_{q})^{i+l}\pi_{j}\pmb{\Lambda}_{i}\widetilde{\pmb{\Lambda}}_{l}.$$
As $(A-\lambda_{j}I_{q})^{m+1}\pi_{j}=0$, it follows that $$\pmb{A}\widetilde{\theta}_{0}(\pmb{A})=
\displaystyle\sum_{j=1}^{p}\sum_{i=0}^{m}\lambda_{j}^{-i}(A-\lambda_{j}I_{q})^{i}\pi_{j})
(\sum_{l=0}^{i}\pmb{\Lambda}_{l}\widetilde{\pmb{\Lambda}}_{i-l}).$$
Furthermore, since $\displaystyle\sum_{l=0}^{i}\binom{k}{l}\binom{-k}{i-l}=0$ for all positive integer $i$, we also have $$\displaystyle\sum_{l=0}^{i}\pmb{\Lambda}_{l}\widetilde{\pmb{\Lambda}}_{i-l}=\pmb{0}.$$
Therefore, $$\pmb{A}\widetilde{\theta}_{0}(\pmb{A})=\displaystyle\sum_{j=1}^{p}\pi_{j}=
I_{q}-\pi_{0}.$$ Hence $$\widetilde{\theta}_{0}(\pmb{A})\pmb{A}\widetilde{\theta}_{0}(\pmb{A})=\widetilde{\theta}_{0}(\pmb{A})-
\pi_{0}\widetilde{\theta}_{0}(\pmb{A})=
\widetilde{\theta}_{0}(\pmb{A})$$ and because of $\pmb{A}\pmb{0}_{0}^{2}\pi_{0}^{2}=\pmb{0}_{0}\pi_{0}$, we conclude that
$$(\widetilde{\theta}_{0}(\pmb{A})+\pmb{0}_{0}\pi_{0})\pmb{A}(\widetilde{\theta}_{0}(\pmb{A})+
\pmb{0}_{0}\pi_{0})=\widetilde{\theta}_{0}(\pmb{A})+\pmb{0}_{0}\pi_{0}.$$
To complete the proof, it remains to show that $\widetilde{\theta}_{0}(\pmb{A})+\pmb{0}_{0}\pi_{0}$ satisfies $(1^{t_{0}})$. Since $\pmb{A}\widetilde{\theta}_{0}(\pmb{A})=I_{q}-\pi_{0}$ and $\pmb{A}^{t_{0}}\pi_{0}=\pmb{0}_{0}\pi_{0}$, it follows that $$\pmb{A}^{t_{0}+1}\widetilde{\theta}_{0}(\pmb{A})=\pmb{A}^{t_{0}}-\pmb{A}^{t_{0}}\pi_{0}=\pmb{A}^{t_{0}}-\pmb{0}_{0}\pi_{0}.$$ But since $\pmb{A}^{t_{0}+1}\pmb{0}_{0}\pi_{0}=\pmb{0}_{0}\pi_{0}$, it follows that $$\pmb{A}^{t_{0}+1}(\widetilde{\theta}_{0}(\pmb{A})+\pmb{0}_{0}\pi_{0})=\pmb{A}^{t_{0}}.$$
The uniqueness of a $\{1^{t_{0}},3,5\}$-inverse $(B_{k})_{k}$ of $\pmb{A}$ follows from the uniqueness of each $B_{k}$ which is the unique $\{1^{t_{0}},3,5\}$-inverse of $A^{k}$, for all positive integer $k$. We conclude that $\widetilde{\theta}_{0}(\pmb{A})+\pmb{0}_{0}\pi_{0}$ is the $\{1^{t_{0}},3,5\}$-inverse of $\pmb{A}$.
\end{proof}
\begin{rmk}~
\\ Although we have specified that the set $F$ is a field and that the characteristic polynomial of $A$ splits over $F$, the theorem~\ref{thm 535} remains true in any integral domain $R$ and for any square matrix. In this case, $A_{d}$ is a matrix over the algebraic closure of the quotient field of $R$.
\end{rmk}
\begin{rmk}~
\\ If $F=\mathbb{C}$ is the complex field, then
\begin{eqnarray*}
A_{d}&=&(\pmb{A}_{d})(1)=\widetilde{\theta}_{0}(\pmb{A})(1)\\
&=&\displaystyle\sum_{j=1}^{p} \pi_{j} \sum_{i=0}^{m}\lambda_{j}^{-i-1}(A-\lambda_{j}I_{q})^{i})(\widetilde{\pmb{\Lambda}}_{i})(1)\\
&=&\sum_{j=1}^{p} \pi_{j} \sum_{i=0}^{m}(-1)^{i}\lambda_{j}^{-i-1}(A-\lambda_{j}I_{q})^{i}).
\end{eqnarray*}
We find then the well-known result that $A_{d}=\displaystyle\frac{1}{z}(A)$ is the matrix function corresponding to the reciprocal $f(z)= \displaystyle\frac{1}{z}$, defined on nonzero eigenvalues (see e.g. Corollary~1. p. 165 of~\cite{Ben}).
\end{rmk}
Let now $\mathcal{U}$ be a basis of the free $F_{\mathcal{S}^{\ast}}$-module $F_{\mathcal{S}^{\ast}}\langle \mathcal{T} \rangle$. Since $\mathcal{U}$ has the same cardinality as $\mathcal{T}$, it can be indexed by $\mathbb{N}$, i.e., $\mathcal{U}=\{\pmb{\mathcal{U}}_{i}\}_{i\in \mathbb{N}}$.
\begin{thm}\label{Thm : 010}  Let $F$ be a field of characteristic $0$ and $\mathcal{U}=\{\pmb{\mathcal{U}}_{i}\}_{i\in \mathbb{N}}$ be a basis of the $F_{\mathcal{S}^{\ast}}$-module $F_{\mathcal{S}^{\ast}}\langle \mathcal{T} \rangle$. Assume that $\mathcal{U}$ is contained in the $F$-vector space spanned by $\mathcal{T}$. Then we have the following:
\begin{enumerate}[1.]
\item For all nonnegative integer $i$, there exists a polynomial $P_{i}(X)\in F[X]$ such that $\pmb{\mathcal{U}}_{i}=(P_{i}(k))_{k\geq0}$.
\item $\widetilde{\theta}(\pmb{\mathcal{U}}_{i})=(P_{i}(-k))_{k\geq0}$ For all nonnegative integer $i$.
\item Let $A$ be a square matrix over $F$ with characteristic polynomial splits over $F$. Let $$\pmb{A}=\pmb{N}(A) + \pmb{\mathcal{B}}_{0}(P_{0}(k))_{k} + \cdots + \pmb{\mathcal{B}}_{m}(P_{m}(k))_{k}$$ be the representation of $A$ with respect to $(\mathcal{S}^{\ast}, \mathcal{U})$, which is the $\mathcal{P}$-cf of $A$ relative to $(\mathcal{S}^{\ast}, \mathcal{U})$. Then the sequence of matrices obtained by plugging $-k$ for $k$ in the geometric part $$\pmb{\mathcal{B}}_{0}(P_{0}(k))_{k} + \cdots + \pmb{\mathcal{B}}_{m}(P_{m}(k))_{k}$$ of $A$ is $\pmb{A}_{d}-\pi_{0}\pmb{0}_{0}$.
\end{enumerate}
\end{thm}
\proof~
\begin{enumerate}[1.]
\item It is well known that when the characteristic of $F$ is $0$, $\binom{k}{j}$ is a polynomial in $k$, i.e., there exists $Q_{j}(X)\in F[X]$ such that $\binom{k}{j}=Q_{j}(k)$ and then $\pmb{\Lambda}_{j}=(Q_{j}(k))_{k\geq 0}$. Since $\mathcal{U}$ is contained in the $F$-vector space spanned by $\mathcal{T}$, there exist elements $a_{ij}$ in $F$, such that
$$\pmb{\mathcal{U}}_{i}=\sum_{finite} a_{ij}\pmb{\Lambda}_{j}.$$
In other words, $\pmb{\mathcal{U}}_{i}=(P_{i}(k))_{k\geq0}$, where
$$P_{i}(k)=\sum_{finite} a_{ij}Q_{j}(k).$$
\item It is easily seen that the equality $\binom{k}{j}=Q_{j}(k)$, which is between two different forms of the same sequence, still valid when we plug in $-k$ for $k$. Then
\begin{eqnarray*}
\widetilde{\theta}(\pmb{\mathcal{U}}_{i})&=&\sum_{finite} a_{ij}\widetilde{\theta}(\pmb{\Lambda}_{j})\\
&=&\sum_{finite} a_{ij}(\binom{-k}{j})_{k\geq 0}\\
&=&\sum_{finite} a_{ij}(Q_{j}(-k))_{k\geq 0}\\
&=&(P_{i}(-k))_{k\geq 0}.
\end{eqnarray*}
\item The result follows immediately from $2.$ above together with Theorem~\ref{thm 535}.
\end{enumerate}
\endproof
As a particular consequence of the above theorem, we obtain the following result.
\begin{thm} Let $F$ be a field of characteristic $0$ and $\mathcal{H}=(\pmb{\Gamma}^{i})_{i\geqslant 0}$, where $\pmb{\Gamma}$ is the sequence $\pmb{\Gamma}=(0,1,2,\ldots)$. Let $A$ be a square matrix over $F$ with characteristic polynomial splits over $F$. Then the sequence of matrices obtained by plugging $-k$ for $k$ in the geometric part of $A$ relative to $(\mathcal{S}^{\ast}, \mathcal{H})$ is $\pmb{A}_{d}-\pi_{0}\pmb{0}_{0}$.
\end{thm}
 \proof It is well known that
 \begin{equation*}
(\pmb{\Gamma}^{i})_{i\geqslant 0}=T(\pmb{\Lambda_{i}})_{i\geqslant 0}
\end{equation*}
where $T$ is the infinite invertible lower triangular matrix $(m!S(n,m))_{n,m\geqslant 0}$ and $S(n,m)$ are the Stirling numbers of the second kind (see, e.g. Quaintance and Gould~\cite{Qua}).
\\From this result, we deduce on the one hand that $\mathcal{H}$ is contained in the $F$-vector space spanned by $\mathcal{T}$ and on the other hand that $\mathcal{H}$ is, as is well known, a basis of the $F_{\mathcal{S}^{\ast}}$-module $F_{\mathcal{S}^{\ast}}\langle \mathcal{T} \rangle$. The corollary is then a consequence of Theorem~\ref{Thm : 010}.
\endproof
Now let us consider the case where $F=\mathbb{C}$ the complex field. Consider the following sets
\begin{itemize}
\item $\mathcal{S}^{\ast}=\{\pmb{\lambda}=(\lambda^{k})_{k\geqslant 0}/\lambda\in \mathbb{C}, \lambda\neq 0\}$
\item $\widetilde{S}=\{\pmb{\lambda}/\lambda\in \mathbb{R}, \lambda\neq 0\}$
\item $\mathcal{S}^{+}=\{\pmb{\lambda}\in \mathcal{S}^{\ast}/ Im(\lambda)> 0\}$
\item $\mathcal{S}^{-}=\{\pmb{\lambda}\in \mathcal{S}^{\ast}/ Im(\lambda)< 0\}=\{\pmb{\overline{\lambda}}/ \pmb{\lambda}\in \mathcal{S}^{+}\}$
\item $\mathcal{S}^{+}_{1}=\{\frac{\pmb{\lambda}+\pmb{\overline{\lambda}}}{2}/\pmb{\lambda}\in \mathcal{S}^{+}\}=\{[r,\cos(\theta)]/0\neq r\in \mathbb{R}^{+}, \theta\in ]0,\pi[\}$, where $[r,\cos(\theta)]=(r^{k}\cos(k\theta))_{k}$
\item $\mathcal{S}^{+}_{2}=\{\frac{\pmb{\lambda}-\pmb{\overline{\lambda}}}{2\mathrm{i}}/\pmb{\lambda}\in \mathcal{S}^{+}\}=\{[r,\sin(\theta)]/0\neq r\in \mathbb{R}^{+}, \theta\in ]0,\pi[\}$, where $[r,\sin(\theta)]=(r^{k}\sin(k\theta))_{k}$
    \end{itemize}
Clearly $\widetilde{S}, \mathcal{S}^{+}, \mathcal{S}^{-}$ constitute a partition of $\mathcal{S}^{\ast}$, and then the set $\mathcal{S}^{\circ}\cup \mathbb{S}$, $\mathbb{S}=\widetilde{S}\bigcup \mathcal{S}^{+}_{1}\bigcup \mathcal{S}^{+}_{2}$, is $\mathbb{C}$-linearly independent and thus is $\mathbb{R}$-linearly independent.
\\From the the general theory of linear recurrence sequences we have the following:
\begin{itemize}
\item $\mathbb{R}_{\mathcal{S}^{\circ}}\bigoplus \mathbb{R}_{\mathbb{S}}$ is the $\mathbb{R}$-vector space of real linear recurrence sequences whose characteristic polynomials are of the form $X^{m}P(X)$ where the polynomials $P(X)\in \mathbb{R}[X]$ are square-free with nonzero constant terms.
\item $\pmb{\Gamma}$ is transcendental over $\mathbb{C}_{\mathcal{S}^{\ast}}$ and it is so over $\mathbb{R}_{\mathbb{S}}$
\item $\mathbb{R}_{\mathcal{S}^{\circ}}\bigoplus \mathbb{R}_{\mathbb{S}}[\Gamma]$ is the $\mathbb{R}$-vector space of real linear recurrence sequences whose characteristic polynomials $P(X)\in \mathbb{R}[X]$.
\item If $\pmb{u}\in \mathbb{R}_{\mathcal{S}^{\circ}}\bigoplus \mathbb{R}_{\mathbb{S}}[\Gamma]$ then there exist $\pmb{\rho}_{1},\ldots,\pmb{\rho}_{l}\in \widetilde{S}$, $\pmb{\lambda}_{1}=(r_{1}^{k}e^{\mathrm{i}k\theta_{1}})_{k},\ldots,\pmb{\lambda}_{m}=(r_{m}^{k}e^{\mathrm{i}k\theta_{m}})_{k}\in \mathcal{S}^{+}$, $\theta_{1},\ldots,\theta_{m}\in ]0,\pi[$, $\mu_{1},\ldots,\mu_{q}\in \mathbb{R}$, $P_{1},\ldots, P_{l}\in \mathbb{R}[X]$ and $Q_{1},\ldots, Q_{m}\in \mathbb{C}[X]$ such that, for all $k\geq 0$,
   \begin{eqnarray*}
    u(k)&=&\sum_{i=1}^{q}\mu_{i}\pmb{0}_{i}(k)+\sum_{i=0}^{l}P_{i}(k)\rho_{i}^{k}+
    \sum_{j=0}^{m}(Q_{j}(k)\lambda_{j}^{k}+\overline{Q}_{j}(k)\overline{\lambda}_{j}^{k})\\
    &=&\sum_{i=1}^{q}\mu_{i}\pmb{0}_{i}(k)+\sum_{i=0}^{l}P_{i}(k)\rho_{i}^{k}+
    \sum_{i=0}^{m}(Q_{i}(k)\lambda_{i}^{k}+\overline{Q_{i}(k)\lambda_{i}^{k}})\\
    &=&\sum_{i=1}^{q}\mu_{i}\pmb{0}_{i}(k)+\sum_{i=0}^{l}P_{i}(k)\rho_{i}^{k}+
    \end{eqnarray*}
    \begin{eqnarray*}
    &&\sum_{i=0}^{m}2Re(Q_{i}(k))[r_{i},\cos(\theta_{i})](k)-\sum_{i=0}^{m}2Im(Q_{i}(k))[r_{i},\sin(\theta_{i})](k).
    \end{eqnarray*}
This means that $\mathcal{S}^{\circ}\bigcup \{\pmb{\lambda}\pmb{\Gamma}^{i}/0\neq \lambda\in \mathbb{R}, i\in \mathbb{N}\}\bigcup \{[r,\cos(\theta)]\pmb{\Gamma}^{i}, [r,\sin(\theta)]\pmb{\Gamma}^{i}/ i\in \mathbb{N},\,\,0\neq r\in \mathbb{R}^{+},\,\, \theta\in ]0,\pi[\}$ spans the $\mathbb{R}$-vector space $\mathbb{R}_{\mathcal{S}^{\circ}}\bigoplus \mathbb{R}_{\mathbb{S}}[\pmb{\Gamma}]$, and hence is a basis for it.
\end{itemize}
With the above results and notations we can formulate the following.
\begin{thm}\label{Thmmm 1002}  Let $A\in M_{q}(\mathbb{R})$ be a real matrix and let
$\mathbb{S}=\widetilde{S}\bigcup \mathcal{S}^{+}_{1}\bigcup \mathcal{S}^{+}_{2}$.
Then the sequence of matrices obtained by plugging $-k$ for $k$ in the geometric part of $A$ relative to $(\mathbb{S}, \mathcal{H})$ is $\pmb{A}_{d}-\pi_{0}\pmb{0}_{0}$.
\end{thm}
\proof The proof follows simply from the fact that the equalities $$\pmb{\lambda}+\pmb{\overline{\lambda}}=2[r,\cos(\theta)]$$ and $$\pmb{\lambda}-\pmb{\overline{\lambda}}=2\mathrm{i}[r,\sin(\theta)]$$ still valid when we plug in $-k$ for $k$.
\endproof
From Theorem~\ref{Thmmm 1002}, we obtain the following theorem.
\begin{prop}  Let $A\in M_{q}(\mathbb{R})$ be a real matrix and let
$\mathbb{S}=\widetilde{S}\bigcup \mathcal{S}^{+}_{1}\bigcup \mathcal{S}^{+}_{2}$.
Then the sequence of matrices obtained by plugging $-k$ for $k$ in the geometric part of $A$ relative to $(\mathbb{S}, \mathcal{T})$ is $\pmb{A}_{d}-\pi_{0}\pmb{0}_{0}$.
\end{prop}
\proof The proof follows immediately from the fact that $\mathcal{T}$ is contained in the $\mathbb{R}$-vector space spanned by $\mathcal{H}=(\pmb{\Gamma}^{i})_{i\geq 0}$, since $(\pmb{\Lambda}_{i})_{i\geq 0}=D(\pmb{\Gamma}^{i})_{i\geq 0}$ where
$$D=(\frac{s(n,k)}{n!})_{n,k\geqslant 0}$$ is the infinite lower triangular matrices with coefficients in $\mathbb{R}$ and $s(n,k)$ are the Stirling numbers of the first kind (see, e.g. Quaintance and Gould~\cite{Qua}).
\endproof
\begin{rmk} In the case of a field of characteristic $0$, the greatest integer such that $\pmb{\lambda}_{j}\pmb{\Lambda}_{t_{j}-1}$ appear in the $\mathcal{P}$-cf of $A$ relative to $(\mathcal{S}^{\ast}, \mathcal{T})$ and the greatest integer such that $\pmb{\lambda}_{j}\pmb{\Gamma}^{t_{j}-1}$ appear in the $\mathcal{P}$-cf of $A$ relative to $(\mathcal{S}^{\ast}, \mathcal{H})$ are the same. Therefore in Corollary~\ref{Cor 22}, we can replace $\mathcal{T}$ with $\mathcal{H}$ and $\pmb{\lambda}_{j}\pmb{\Lambda}_{t_{j}-1}$ with $\pmb{\lambda}_{j}\pmb{\Gamma}^{t_{j}-1}$.
\end{rmk}
\begin{rmk}\label{remark13} It is obvious that  the $0$th coordinate $\pmb{\mathcal{A}}_{0}$ of $A$ with respect to $(\mathcal{S}^{\ast}, \mathcal{T})$ and the $0$th coordinate $\pmb{\mathcal{A'}}_{0}$ of $A$ with respect to $(\mathcal{S}^{\ast}, \mathcal{H})$ are the same, then $A_{s}=\pmb{\mathcal{A'}}_{0}(1)$ in virtue of Corollary~\ref{Cor 11}.
\\In general we can easily establish the following formulas for change of coordinates of a matrix $A$ from $(\mathcal{S}^{\ast}, \mathcal{T})$ to $(\mathcal{S}^{\ast}, \mathcal{H})$, and vice versa:
$$\pmb{\mathcal{A}}_{i}=\sum_{j=0}^{m}j!S(j,i)\pmb{\mathcal{A'}}_{j}$$
and
$$\pmb{\mathcal{A'}}_{i}=\sum_{j=0}^{m}\frac{s(j,i)}{j!}\pmb{\mathcal{A}}_{j}$$
where $S(n,j)$ and $s(n,j)$ are the Stirling numbers of the second kind and the first kind respectively, and $m$ is the integer defined in
Proposition~\ref{Prop 23}.
\end{rmk}
For the purpose of illustration, let us consider the following examples.
\begin{ex} Let
$$B=\begin{pmatrix} 1&1&0&0\\
-2&0&1&0\\2&0&0&1\\-2&-1&-1&-1
\end{pmatrix}.$$
Then one can verify that for all $k\geq 1$ we have $B^{k}=B(k)$, where
\begin{equation*}
B(k)=\left(\begin{array}{ccc}
\cos(\dfrac{k\pi}{2})+\sin(\dfrac{k\pi}{2})&\dfrac{2\sin(\dfrac{k\pi}{2})-k\cos(\dfrac{k\pi}{2})}{2}\\
-2\sin(\dfrac{k\pi}{2})&\dfrac{(k+2)\cos(\dfrac{k\pi}{2})+(k-1)\sin(\dfrac{k\pi}{2})}{2}\\
2\sin(\dfrac{k\pi}{2})&\dfrac{-k\cos(\dfrac{k\pi}{2})+(1-k)\sin(\dfrac{k\pi}{2})}{2}\\
-2\sin(\dfrac{k\pi}{2})&
\dfrac{k\cos(\dfrac{k\pi}{2})+(k-3)\sin(\dfrac{k\pi}{2})}{2}
\end{array}\right.
\end{equation*}
\begin{equation*}
\hspace*{2.5cm}\left.\begin{array}{ccc}
\dfrac{(1-k)\sin(\dfrac{k\pi}{2})-k\cos(\dfrac{k\pi}{2})}{2}&\dfrac{(1-k)\sin(\dfrac{k\pi}{2})}{2}\\
k\sin(\dfrac{k\pi}{2})&
\dfrac{-k\cos(\dfrac{k\pi}{2})+(k-1)\sin(\dfrac{k\pi}{2})}{2}\\
(1-k)\sin(\dfrac{k\pi}{2})+\cos(\dfrac{k\pi}{2})&
\dfrac{k\cos(\dfrac{k\pi}{2})+(3-k)\sin(\dfrac{k\pi}{2})}{2}\\
(k-2)\sin(\dfrac{k\pi}{2})&
\dfrac{(2-k)\cos(\dfrac{k\pi}{2})+(k-3)\sin(\dfrac{k\pi}{2})}{2}
\end{array}\right)
\end{equation*}
\\From this result, we may conclude the following:
\begin{itemize}
\item Since $B(0)=I_{4}$, $B$ is nonsingular.
\item
$$B^{-1}=B(-1)=\begin{pmatrix} -1&-1&-1&-1\\2&1&1&1\\-2&-1&-2&-2\\2&2&3&2\end{pmatrix}.$$
\item Also we have $B^{-k}=B(-k)$.
\item Using Corollary~\ref{Cor 22}, we get that the minimal polynomial of $B$ is
$$(X-\e^{\frac{\pi\mathrm{i}}{2}})^{2}(X-\e^{\frac{-\pi\mathrm{i}}{2}})^{2}=(X^{2}+1)^{2}$$
\item By virtue of Corollary~\ref{Cor 11} and Remark~\ref{remark13}, we have
$$B_{s}=\pmb{\mathcal{B}}_{0}(1) =\begin{pmatrix} 1&1&\frac{1}{2}&\frac{1}{2}\\
-2&\frac{-1}{2}&0&\frac{-1}{2}\\2&\frac{1}{2}&1&\frac{3}{2}\\-2&\frac{-3}{2}&-2&\frac{-3}{2}
\end{pmatrix}$$
\end{itemize}
\end{ex}

\begin{ex} Let us take example~\ref{example 1}. We have
$$A_{d}=A(-1)=\begin{pmatrix}
\frac{1}{4}&\frac{1}{4}&\frac{3}{16}&\frac{-1}{16}\vspace*{0.1pc}\\
\frac{1}{4}&\frac{1}{4}&\frac{5}{16}&\frac{-3}{16}\vspace*{0.1pc}\\
0&0&\frac{-1}{4}&\frac{1}{4}\vspace*{0.1pc}\\
0&0&\frac{1}{4}&\frac{-1}{4}
\end{pmatrix}.$$
For all $k\geq 1$,
$$A_{d}^{k}=A(-k)=\begin{pmatrix}
2^{-1-k}&2^{-1-k}&\frac{1}{16}2^{-k}((-1)^{1+k}+5)&\frac{1}{16}2^{-k}((-1)^{k}-1)\vspace*{0.1pc}\\
2^{-1-k}&2^{-1-k}&\frac{5}{16}2^{-k}((-1)^{1+k}+1)&\frac{1}{16}2^{-k}(5(-1)^{k}-1)\vspace*{0.1pc}\\
0&0&(-1)^{k}2^{-1-k}&(-1)^{1+k}2^{-1-k}\vspace*{0.1pc}\\
0&0&(-1)^{1+k}2^{-1-k}&(-1)^{k}2^{-1-k}
\end{pmatrix}.$$
\end{ex}
\begin{ex} Let $x\in \mathbb{C}$ and let
$$E=\begin{pmatrix}
2\sqrt{3}-x-10&2\sqrt{3}-2x-23&\sqrt{3}-x-5\\4&\sqrt{3}+9&2\\
-2\sqrt{3}+2x+2&-4\sqrt{3}+4x+5&-\sqrt{3}+2x+1
\end{pmatrix}$$
Let
\begin{eqnarray*}
\delta_{s}(x)=\begin{cases}
0 &\text{if}\quad x=0\\x^{s} &\text{if}\quad x\neq 0
\end{cases}
\end{eqnarray*}
and put
$$E(k)=
\begin{pmatrix}
e_{11}(k)&e_{12}(k)&e_{13}(k)\\e_{21}(k)&e_{22}(k)&e_{23}(k)\\e_{31}(k)&e_{32}(k)&e_{33}(k)
\end{pmatrix}$$
where
\begin{eqnarray*}
e_{11}(k)&=&2^{k+1}(\cos(\frac{k\pi}{6})-5\sin(\frac{k\pi}{6}))-\delta_{k}(x)\\
e_{12}(k)&=&2^{k+1}(\cos(\frac{k\pi}{6})-\frac{23}{2}\sin(\frac{k\pi}{6}))-2\delta_{k}(x)\\
e_{13}(k)&=&2^{k}(\cos(\frac{k\pi}{6})-5\sin(\frac{k\pi}{6}))-\delta_{k}(x)\\
e_{21}(k)&=&2^{k+2}\sin(\frac{k\pi}{6})\\
e_{22}(k)&=&2^{k}(\cos(\frac{k\pi}{6})+9\sin(\frac{k\pi}{6}))\\
e_{23}(k)&=&2^{k+1}\sin(\frac{k\pi}{6})\\
e_{31}(k)&=&-2^{k+1}(\cos(\frac{k\pi}{6})-\sin(\frac{k\pi}{6}))+2\delta_{k}(x)\\
e_{32}(k)&=&-2^{k+2}(\cos(\frac{k\pi}{6})-\frac{5}{4}\sin(\frac{k\pi}{6}))+4\delta_{k}(x)\\
e_{33}(k)&=&-2^{k}(\cos(\frac{k\pi}{6})-\sin(\frac{k\pi}{6}))+2\delta_{k}(x)
\end{eqnarray*}
Then one can verify that for all $k\geq1$, we have
$E^{k}=E(k)$.
\\From this result one can conclude easily the following:
\begin{itemize}
\item $$E(0)=\begin{pmatrix} 2-\delta_{0}(x)&2-2\delta_{0}(x)&1-\delta_{0}(x)\\0&1&0\\-2+2\delta_{0}(x)&-4+4\delta_{0}(x)&-1+2\delta_{0}(x)\end{pmatrix}$$
Thus $E$ is nonsingular if and only if $x\neq 0$.
\item $$E_{d}=\begin{pmatrix} \frac{\sqrt{3}+5}{2}-\delta_{-1}(x)&\frac{2\sqrt{3}+23}{4}-2\delta_{-1}(x)&\frac{\sqrt{3}+5}{4}-\delta_{-1}(x)\vspace*{0.1pc}\\
-1&\frac{\sqrt{3}-9}{4}&\frac{-1}{2}\vspace*{0.1pc}\\\frac{-\sqrt{3}-1}{2}+2\delta_{-1}(x)&\frac{-4\sqrt{3}-5}{4}+4\delta_{-1}(x)&
\frac{-\sqrt{3}-1}{4}+2\delta_{-1}(x)\end{pmatrix}$$
\item  For all $k\geq1$
$$E_{d}^{k}=\begin{pmatrix}
a_{11}(k)&a_{12}(k)&a_{13}(k)\\a_{21}(k)&a_{22}(k)&a_{23}(k)\\a_{31}(k)&a_{32}(k)&a_{33}(k)
\end{pmatrix}$$
where
\begin{eqnarray*}
a_{11}(k)&=&2^{-k+1}(\cos(\frac{k\pi}{6})+5\sin(\frac{k\pi}{6}))-\delta_{-k}(x)\\
a_{12}(k)&=&2^{-k+1}(\cos(\frac{k\pi}{6})+\frac{23}{2}\sin(\frac{k\pi}{6}))-2\delta_{-k}(x)\\
a_{13}(k)&=&2^{-k}(\cos(\frac{k\pi}{6})+5\sin(\frac{k\pi}{6}))-\delta_{-k}(x)\\
a_{21}(k)&=&-2^{-k+2}\sin(\frac{k\pi}{6})\\
a_{22}(k)&=&2^{-k}\cos(\frac{k\pi}{6})-9\times2^{-k}\sin(\frac{k\pi}{6})\\
a_{23}(k)&=&-2^{-k+1}\sin(\frac{k\pi}{6})\\
a_{31}(k)&=&-2^{-k+1}(\cos(\frac{k\pi}{6})+\sin(\frac{k\pi}{6}))+2\delta_{-k}(x)\\
a_{32}(k)&=&-2^{-k+2}(\cos(\frac{k\pi}{6})-\frac{5}{4}\sin(\frac{k\pi}{6}))+4\delta_{-k}(x)\\
a_{33}(k)&=&-2^{-k}(\cos(\frac{k\pi}{6})+\sin(\frac{k\pi}{6}))+2\delta_{-k}(x)
\end{eqnarray*}
\item Using Corollary~\ref{Cor 22}, we get that the minimal polynomial of $E$ is
$$
(X-x)(X-2\e^{\frac{\pi\mathrm{i}}{6}})^{2}(X-\e^{\frac{-\pi\mathrm{i}}{6}})^{2}=(X-x)(X^{2}-2\sqrt{3}X+4)
$$
\end{itemize}
\end{ex}


\begin{thebibliography}{99}

\bibitem{Ben} A. Ben-Israel and T.N.E. Greville. \emph{Generalized inverse Theory and Applications.} 2nd Edition, New York, Springer Verlag, (2003).

\bibitem{Camp3} S.L. Campbell et al. \emph{Applications of the Drazin Inverse to Linear Systems of Differential Equations with Singular Constant Coefficients.} SIAM J. Appl. Math. Vol., {\bf 31(3)}, 411-425, 1976.

\bibitem{Camp2} S.L. Campbell. \emph{Continuity of The Drazin inverse.} Linear and Multilinear Algebra, {\bf 8}, 265-268, 1980.

\bibitem{Camp1} S.L. Campbell. \emph{The Drazin inverse and systems of second order linear differential equations.} Linear and Multilinear Algebra, {\bf 14(2)}, 195-198, 1983.

\bibitem{Camp} S.L. Campbell and C. D. Meyer, Jr. \emph{Generalized inverses of linear transformations.} Dover, New York, 1991.

\bibitem{Cohn} P.M. Cohn. R. \emph{Basic Algebra.} Springer, London, (2003).

\bibitem{Cvet} D.S. Cvetkovi\'{c}-Ili\'{c}, J. Chen, and Z. Xu. \emph{Explicit representations of the Drazin inverse of block matrix and modified matrix.} Linear and Multilinear Algebra, {\bf 57(4)}, 355-364, 2009.

\bibitem{Draz}  M.R. Drazin. \emph{Pseudo-inverses in associative rings and semigroups.}
Amer. Math. Monthly, {\bf 65}, 506-514, 1958.

\bibitem{Fill} J.P. Fillmore and M.L. Marx. \emph{Linear recursive sequences.} SIAM Rev., {\bf 10}, 342-353, 1968.

\bibitem{Grev} T.N.E. Greville. \emph{The Souriau-Frame Algorithm and the Drazin Pseudoinverse.} Linear Algebra Appl., {\bf 6}, 205-208, 1973.

\bibitem{Kwap} M. Kwapisz, \emph{Remarks on the calculation of the power of a matrix.} J. Difference Equ. Appl., {\bf 10(2)}, 139-149, 2004.

\bibitem{Kyrc} I.I. Kyrche. \emph{Analogs of the adjoint matrix for generalized inverse and corresponding Cramer rules.} Linear and Multilinear Algebra, {\bf 56(4)}, 453-469, 2008.

\bibitem{Lev} J. Levine and R.E. Hartwig. \emph{Applications of Drazin inverse to the Hill cryptographic systems.} Cryptologia, {\bf 4}, 71-85, 1980.

\bibitem{Liu} X. Liu, L. Wu, and J. Benitez. \emph{On linear combinations of generalized involutive matrices.} Linear and Multilinear Algebra, {\bf 59(11)}, 1221-1236, 2011.

\bibitem{Male} B. Malesevic and I. Jovovic. \emph{A procedure for finding the k-th power of a matrix.} Maplesoft, (accessed July 05, 2020), 2007.

\bibitem{Mou1} M. Mou\c{c}ouf. \emph{Arbitrary positive powers of semicirculant and $r$-circulant matrices.} Linear and
Multilinear Algebra. 2021. DOI:10.1080/03081087.2021.1968329.

\bibitem{Mou2} M. Mou\c{c}ouf. \emph{A closed-form expression for the $k$th power of semicirculant and $r$-circulant matrices.} arXiv:2006.16198v1 [math.RA].

\bibitem{Qua} J. Quaintance and H.W. Gould. \emph{Combinatorial identities for Stirling numbers: the unpublished notes of HW Gould.} World Scientific, (2015).

\bibitem{Riord} J. Riordan. \emph{Combinatorial Identities.} John Wiley $\&$ Sons, Inc, New York, (1968).

\bibitem{Ros} N. Rose. \emph{A Note on Computing the Drazin Inverse.} Linear Algebra
Appl., {\bf 15}, 95-98, 1976.

\bibitem{Sol} F. Soleymani and P.S Stanimirovi\'{c}. \emph{A higher order iterative method for computing the Drazin inverse.} Sci. World J., 115-124, 2013.

\bibitem{Wei} Y. Wei. \emph{Expression for the Drazin inverse of a $2 \times 2$ block matrix.} Linear and Multilinear Algebra, {\bf 45(2-3)}, 131-146, 1998.

\bibitem{Wilk} J. Wilkinson. \emph{Note on the Practical Significance of the Drazin Inverse.} Campbell, S.L., Ed.; Recent Applications of Generalized Inverses, Pitman Advanced Publishing Program, Research Notes in Mathematics, No. 66,Boston; NASA: Washington, DC, USA, 82-99, 1982.

\bibitem{Yu} Y. Yu and Y. Wei. \emph{Determinantal representation of the generalized inverse $A^{(2)}_{T,S}$ over integral domains and its applications.} Linear and Multilinear Algebra {\bf 57}, 547-559, 2009.

\bibitem{Zhang} X. Zhang and G. Chen. \emph{The computation of Drazin inverse and its application in Markov chains.} Appl. Math. Comput., {\bf183}, 292-300, 2006.

\end{thebibliography}
\end{document}